\crefname{prop}{Proposition}{Propositions}
\crefname{fact}{Fact}{Facts}
\setlist[enumerate,1]{label={\upshape\alph*)}}
\setlist[enumerate,2]{label={\upshape\roman*)}}
\declaretheorem[name=Theorem,numberwithin=section]{thm}
\declaretheorem[name=Lemma,numberlike=thm]{lem}
\declaretheorem[name=Corollary,numberlike=thm]{cor}
\declaretheorem[name=Proposition,numberlike=thm]{prop}
\declaretheorem[name=Definition,numberlike=thm,style=definition,qed=\(\lozenge\)]{defn}
\declaretheorem[name=Example,numberlike=thm,style=definition,qed=\(\lozenge\)]{ex}
\declaretheorem[name=Remark,numberlike=thm,style=definition,qed=\(\lozenge\)]{rem}
\declaretheorem[name=Fact,numberlike=thm,style=definition,qed=\(\lozenge\)]{fact}
\newcommand{\IN}{\mathbb{N}}
\newcommand{\IZ}{\mathbb{Z}}
\newcommand{\IR}{\mathbb{R}}
\newcommand{\IC}{\mathbb{C}}
\newcommand{\toprm}{\mathrm{top}}
\newcommand{\Mat}{\mathrm{Mat}}
\newcommand{\prob}{\mathrm{prob}}
\newcommand{\op}{\mathrm{op}}
\newcommand{\EM}{\mathrm{EM}}
\newcommand{\BC}{\mathrm{BC}}
\newcommand{\cF}{\mathcal{F}}
\newcommand{\cK}{\mathcal{K}}
\newcommand{\cB}{\mathcal{B}}
\newcommand{\cQ}{\mathcal{Q}}
\newcommand{\cM}{\mathcal{M}}
\newcommand{\EG}{\underline{EG}}
\newcommand*{\Lin}{\cB}
\newcommand{\frakc}{\mathfrak{c}}
\newcommand{\frakcbar}{\bar{\mathfrak{c}}}
\newcommand*{\sHigCom}{\frakcbar}
\newcommand*{\sHigCor}{\frakc}
\newcommand*{\sHigComRed}{\bar{\mathfrak{c}}^{\mathrm{red}}}
\newcommand*{\sHigCorRed}{\mathfrak{c}^{\mathrm{red}}}
\newcommand*{\red}{\mathrm{red}}
\NewDocumentCommand{\textCstar}{}{\ensuremath{\mathrm{C}^*\!}}
\newcommand{\id}{\mathrm{id}}
\NewDocumentCommand{\blank}{}{{-}}
\numberwithin{equation}{section} 
\author{Alexander Engel\thanks{Institut f{\"u}r Mathematik und Informatik der Universit{\"a}t Greifswald, Walther-Rathenau-Str.\,47, 17489 Greifswald, Germany, 
\href{mailto:alexander.engel@uni-greifswald.de}{alexander.engel@uni-greifswald.de}}
}
\title{Groups acting amenably on their Higson corona}
\date{}
\begin{document}

\maketitle

\begin{abstract}
We investigate groups that act amenably on their Higson corona (also known as bi-exact groups) and we provide reformulations of this in relation to the stable Higson corona, nuclearity of crossed products and to positive type kernels.

We further investigate implications of this in relation to the Baum--Connes conjecture, and prove that Gromov hyperbolic groups have isomorphic equivariant $K$-theories of their Gromov boundary and their stable Higson corona.
\end{abstract}

\setcounter{tocdepth}{2}
\tableofcontents

\paragraph{Acknowledgements.}
I had fruitful discussions with the following people: Alcides Buss, Siegfried Echterhoff, Narutaka Ozawa, Rufus Willett and Christopher Wulff.

I acknowledge financial support by the Deutsche Forschungsgemeinschaft DFG through the Priority Programme SPP 2026 ``Geometry at Infinity'' (EN 1163/5-1, project number 441426261, Macroscopic invariants of manifolds).

\section{Introduction}

This paper arose from pursuing a side idea of \cite{anastructinj}, namely to investigate groups that act amenably on their own (reduced) stable Higson corona $\sHigCorRed G$. The motivation comes from the commutative diagram
\begin{equation}\label{eq_intro}
\xymatrix{
K_*^\toprm(G;\sHigCorRed G) \ar[rr]^-{\mu^*_\EM} \ar[dr]_-{\mu_*^\BC} && K_G^{1-*}(\EG)\\
& K_*(\sHigCorRed G \rtimes_\mu G) \ar[ur]_-{\mu^*_G} & 
}
\end{equation}
where $\mu^*_\EM$ is the co-assembly map of Emerson and Meyer, $\mu_*^\BC$ is the Baum--Connes assembly map, $\mu^*_G$ is the equivariant coarse co-assembly map, and where we have by definition $K^{1-*}_G(\EG) \coloneqq K_{*-1}(C_0(\EG) \rtimes_\red G)$ \cite[Sec.~5.1]{anastructinj}.

In the diagram we may use any crossed product functor $-\rtimes_\mu G$ which is exact, and one of our questions was: In which cases does the choice not matter, i.e. when do $\sHigCorRed G \rtimes_{\max} G$ and $\sHigCorRed G \rtimes_\red G$ coincide (assuming for simplicity that $G$ is exact)? As is now known, this question is intimately connected to amenability of the $G$-$C^*$-algebra $\sHigCorRed G$ \cite{BEW_amenable}. We tried to answer this question in \cite[Props.~5.8 \& 5.12]{anastructinj}, but unfortunately, there is an error in the proofs and one of the goals of this paper is to provide an erratum to it.\footnote{Fortunately, since this question was just a side hustle in \cite{anastructinj}, none of the main results of it are affected by this.} We will discuss the error in detail in \cref{sec_reduced_case}.

As it turns out, the corrected version of \cite[Prop.~5.8]{anastructinj} uses the \emph{unreduced} stable Higson compactification $\sHigCom G$, resp.~corona. Concretely, we prove the following (we only state the version for the compactification):
\begin{thm}[Part of \cref{prop_equiv_amenable_stable_compactification}]
\label{thm_equiv_amenable_stable_compactification}
Let $G$ be a countable and discrete group. Then the following are equivalent to each other:
\begin{enumerate}
\item The group $G$ acts amenably on its Higson compactification $hG$.
\item $\sHigCom G$ is an amenable $G$-\textCstar-algebra.
\item We have $\sHigCom G \rtimes_{\max} G \cong \sHigCom G \rtimes_{\red} G$ and $G$ is exact.
\end{enumerate}
\end{thm}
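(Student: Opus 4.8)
The plan is to run every implication through the theory of amenable $G$-\Cstar-algebras developed in \cite{BEW_amenable}, exploiting one structural observation. The unreduced stable Higson compactification $\sHigCom G$ is the \Cstar-algebra of vanishing-variation (Higson) functions $G \to \mathfrak{K}$, and as such it is a $C(hG)$-algebra: the scalar Higson functions form a unital central subalgebra $C(hG) \subseteq \cZ(M(\sHigCom G))$, multiplication by them preserves vanishing variation, and the left-translation action of $G$ on $\sHigCom G$ restricts on $C(hG)$ to the action $G \curvearrowright hG$ while fixing the fibre $\mathfrak{K}$ pointwise. I will combine this with two standard facts from the theory: a commutative $G$-\Cstar-algebra $C(X)$ is amenable if and only if $G \curvearrowright X$ is topologically amenable, and amenability of $G$-\Cstar-algebras passes to $G$-invariant hereditary subalgebras and to quotients.

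To prove the equivalence of (a) and (b) I treat the two implications separately. For (a) $\Rightarrow$ (b) I use the permanence property that a $G$-$C(hG)$-algebra whose $G$-action covers the amenable base action $G \curvearrowright hG$ is itself amenable: a net of positive-type, finitely supported functions $G \to C(hG)$ witnessing amenability of $C(hG)$ may be pushed into the central copy $C(hG) \subseteq \cZ(M(\sHigCom G))$, where---because these elements are central and the fibre action is trivial---the positive-type inequalities and the convergence to $1$ are checked fibrewise and transfer verbatim. For (b) $\Rightarrow$ (a) I restrict to the corner $p\,\sHigCom G\,p$, where $p \in \mathfrak{K}$ is a constant rank-one projection (fixed by the action); this corner is a $G$-invariant hereditary subalgebra isomorphic to $C(hG)$, so amenability descends to it, and the commutative characterization then yields amenability of $G \curvearrowright hG$.

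For the equivalence of (b) and (c) I appeal to \cite{BEW_amenable}. The implication (b) $\Rightarrow$ (c) has two halves: amenability of $\sHigCom G$ gives $\sHigCom G \rtimes_{\max} G \cong \sHigCom G \rtimes_{\red} G$ directly; and exactness of $G$ follows because, by (a), the action $G \curvearrowright hG$ is amenable, so pulling back an amenability net along the $G$-equivariant surjection $\beta G \twoheadrightarrow hG$ shows that $G \curvearrowright \beta G$ is amenable, which is exactly property A, i.e.\ exactness of $G$. For (c) $\Rightarrow$ (b) I invoke the theorem of \cite{BEW_amenable} that for an exact discrete group the weak containment property $A \rtimes_{\max} G \cong A \rtimes_{\red} G$ forces $A$ to be amenable, applied to $A = \sHigCom G$.

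The main obstacle is the direction (a) $\Rightarrow$ (b). One cannot simply invoke Morita invariance of amenability, because the corner $C(hG) \cong p\,\sHigCom G\,p$ is not full: the ideal it generates is the closure of $C(hG) \otimes \mathfrak{K}$, which is strictly smaller than $\sHigCom G = C_h(G,\mathfrak{K})$. What makes the transfer of the amenability net work is precisely the centrality of $C(hG)$ together with the triviality of the action on the fibre---that is, the genuine $C(hG)$-algebra structure of the unreduced compactification. This is exactly the feature that the reduced stable Higson compactification $\sHigComRed G$ lacks and where the original argument in \cite{anastructinj} broke down; I discuss this in \cref{sec_reduced_case}.
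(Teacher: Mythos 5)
Your overall architecture coincides with the paper's: both proofs run everything through the amenability machinery of \cite{BEW_amenable}, and both hinge on the same two structural facts about the \emph{unreduced} algebra --- that $C(hG)$ sits unitally and centrally inside $\mathcal{M}(\sHigCom G)=C_h(G,\cB(H))$, which gives (a)$\Rightarrow$(b) by pushing the amenability net for $C(hG)$ into $Z\mathcal{M}(\sHigCom G)$ (this is exactly the paper's chain through strong amenability via \cite[Lem.~2.5]{BEW_amenable}), and that compression by a constant rank-one projection $p$ recovers $C(hG)$, which gives (b)$\Rightarrow$(a). For the latter step your route is mildly different: you restrict to the $G$-invariant corner $p\,\sHigCom G\,p\cong C(hG)$ and invoke permanence of amenability under passage to invariant hereditary subalgebras, whereas the paper compresses at the level of double duals via a normal \emph{unital} conditional expectation $(\sHigCom G)^{\ast\ast}\to C(hG)^{\ast\ast}$ as in \eqref{eq_conditional_expectation_to_hG}. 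These are the same mechanism in different clothing, but your version obliges you to actually verify the permanence claim (compress the net by the support projection $q$ of the corner, check $Z(A^{\ast\ast})q\subseteq Z(qA^{\ast\ast}q)$, that $q$ is $G$-fixed and is the unit of the corner's double dual); it is easy but not something to wave off as standard without a reference. Your diagnosis of why all of this fails for $\sHigComRed G$ --- non-unitality of $\varphi\mapsto\varphi\otimes p$ --- is precisely the paper's erratum point.

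The one place where you do genuinely less than the paper, and where I would press you, is (c)$\Rightarrow$(b). You cite \cite{BEW_amenable} for ``exact plus weak containment implies amenable''. For a general $G$-\textCstar-algebra, \cite[Thm.~4.17]{BEW_amenable} yields only that exactness plus weak containment implies \emph{commutant} amenability, and the passage from commutant amenability to amenability is exactly the delicate issue. The paper does not assume it: it proves (commutant amenable) $\Rightarrow$ ($G\curvearrowright hG$ amenable) by hand, using Haagerup's standard form of $((\sHigCom G)^{\op})^{\ast\ast}$ to manufacture a covariant representation whose commutant is $(\sHigCom G)^{\ast\ast}$, and then applying the conditional expectation $\psi$; amenability of $\sHigCom G$ is then recovered from (a). If you want to quote the stronger black box, you must cite a result that actually contains it (for instance the Ozawa--Suzuki resolution of the weak containment problem for exact groups), not \cite{BEW_amenable} alone; otherwise you need to supply the bridge from commutant amenability back to amenability, for which the paper's standard-form argument is the intended filler. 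Everything else in your proposal is sound.
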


Groups acting amenably on their Higson compactification $hG$ have been already studied before, but with another focus: One can prove that this condition is equivalent to the group $G$ being bi-exact (see \cref{def_biexact}) and from this we get a plethora of examples. This will be quickly summarized in \cref{sec_biexact}.

In \cref{sec_nuclear} we will find more reformulations for a group to act amenably on its Higson compactification. Concretely, we prove the following:
\begin{thm}[\cref{prop_equiv_amenable_nuclear_kernels}]
\label{thm_equiv_amenable_nuclear_kernels}
Let $G$ be a countable and discrete group. Then the following are equivalent to each other:
\begin{enumerate}
\item The group $G$ acts amenably on its Higson compactification.
\item The $C^*$-algebra $C_h(G) \rtimes_\red G$ is nuclear.
\item The embedding $\IC \rtimes_\red G \to C_h(G) \rtimes_\red G$ is nuclear.
\item There is a sequence $(k_n)_{n \in \IN}$ in $C_c(G \times G, \Delta)$ of normalized positive type kernels having vanishing variation on diagonals and converging to $1$ uniformly on all finite width neighbourhoods of the diagonal $\Delta$ in $G \times G$.
\end{enumerate}
\end{thm}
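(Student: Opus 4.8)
The plan is to run the cyclic implications (a) $\Rightarrow$ (b) $\Rightarrow$ (c) $\Rightarrow$ (d) $\Rightarrow$ (a); two links are formal and two carry the analytic weight. First, (a) $\Rightarrow$ (b) is the easy half of the amenability--nuclearity dictionary: since $C_h(G) = C(hG)$ is commutative, hence nuclear, amenability of the action of $G$ on $hG$ makes the transformation groupoid $hG \rtimes G$ amenable, and a topologically amenable action with nuclear coefficient algebra has nuclear reduced crossed product (see \cite{BEW_amenable} and the references therein). Next, (b) $\Rightarrow$ (c) is purely formal: if $C_h(G) \rtimes_\red G$ is nuclear then its identity map is nuclear, and the inclusion $\iota \colon \IC \rtimes_\red G \hookrightarrow C_h(G) \rtimes_\red G$ equals $\mathrm{id} \circ \iota$, so $\iota$ is a nuclear map. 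Note that I only ever need the easy direction of (a) $\Rightarrow$ (b), so the more delicate converse (nuclearity forces amenability) is bypassed by the cycle.

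The analytic core is (c) $\Rightarrow$ (d). Nuclearity of $\iota$ provides, since $G$ is countable, sequences of completely positive contractions $\CstarRed(G) \xrightarrow{\phi_n} \Mat_{k_n}(\IC) \xrightarrow{\psi_n} C_h(G) \rtimes_\red G$ with $\theta_n \coloneqq \psi_n \phi_n$ satisfying $\theta_n(\lambda_g) \to \lambda_g$ for every $g \in G$. I would represent $C_h(G) \rtimes_\red G$ faithfully on $\ell^2(G) \otimes \ell^2(G)$, with $C_h(G) \subseteq \ell^\infty(G)$ acting by multiplication on the first leg, and read a Stinespring dilation of each $\theta_n$ against the standard basis $\{\delta_x\}_{x \in G}$ to produce a field of vectors $x \mapsto \zeta_n(x)$ with $\langle \zeta_n(x), \zeta_n(x)\rangle$ close to $1$, whose Gram kernel $k_n(x,y) \coloneqq \langle \zeta_n(x), \zeta_n(y)\rangle$ is positive type and is rendered normalized by rescaling along the diagonal. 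It remains to verify the three defining properties of (d). Finite propagation is arranged by truncating each $\zeta_n(x)$ to its coordinates within a bounded distance of $x$: this preserves positivity and forces $k_n \in C_c(G \times G, \Delta)$, the truncation error being controlled by $\theta_n(\lambda_g) \to \lambda_g$. Convergence of $k_n$ to $1$ uniformly on finite-width neighbourhoods of $\Delta$ is another reading of $\theta_n(\lambda_g) \to \lambda_g$.

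The remaining and crucial property, vanishing variation on diagonals, is exactly where the choice of $C_h(G)$ rather than $\ell^\infty(G)$ enters: because $\psi_n$ takes values in the crossed product with \emph{Higson} coefficients, each diagonal slice $x \mapsto k_n(x, sx)$ is a function of vanishing variation on $G$. I expect this verification to be the main obstacle, since it requires tracking the Higson condition through the Stinespring dilation and the truncation, and it is the one point that genuinely departs from the classical Property A / exactness argument, where the analogous kernels live over $\beta G$ and carry no continuity constraint at infinity. A final diagonalization over the countably many finite-width neighbourhoods then delivers a single sequence $(k_n)_{n \in \IN}$.

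Finally, (d) $\Rightarrow$ (a): writing $h_n(x,s) \coloneqq k_n(x, sx)$ turns the kernels into positive type functions for the action of $G$ on itself that are finitely supported in the group variable $s$, tend to $1$, and --- by vanishing variation on diagonals --- satisfy $h_n(\cdot, s) \in C_h(G)$, hence extend continuously to $hG$. These are precisely the positive type functions witnessing topological amenability of the action of $G$ on $hG$: a Kolmogorov decomposition of each $k_n$ yields an approximately equivariant continuous field $hG \to \mathrm{Prob}(G)$, which is amenability of the groupoid $hG \rtimes G$ and hence of the action. This closes the cycle.
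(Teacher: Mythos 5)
Your architecture --- the cycle (a)$\Rightarrow$(b)$\Rightarrow$(c)$\Rightarrow$(d)$\Rightarrow$(a) with (b)$\Rightarrow$(c) purely formal and the analytic weight on (c)$\Rightarrow$(d) --- matches the paper's in spirit (the paper gets (a)$\Leftrightarrow$(b) and (a)$\Leftrightarrow$(d) by citing Anantharaman-Delaroche, does (b)$\Rightarrow$(c) exactly as you do, and proves (c)$\Rightarrow$(d) by adapting Ozawa's exactness argument). But the one step you yourself flag as ``the main obstacle'' --- that the kernels produced in (c)$\Rightarrow$(d) have \emph{exactly} vanishing variation on diagonals --- is left unproved, and the route you propose makes it genuinely problematic. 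If you build vectors $\zeta_n(x)$ from a Stinespring dilation and then truncate each one to its coordinates in a ball $B(x,R)$, the truncated Gram kernel is only uniformly $\varepsilon$-close to a kernel whose diagonal slices have vanishing variation; an $\varepsilon$-perturbation of a vanishing-variation function need not have vanishing variation (its $r$-variation at infinity is merely bounded by $2\varepsilon$ rather than tending to $0$), and condition (d) requires each individual $k_n$ to satisfy the exact asymptotic condition. So the sharp cutoff that buys you finite propagation threatens to destroy precisely the property that distinguishes this statement from the classical Property A argument.

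The paper avoids this by performing the finite-rank approximation \emph{before} composing with $\psi$: one extends $\phi$ to a u.c.p.\ map on $\cB(\ell^2(G))$ by Arveson, applies Ozawa's Lemma~2 to replace it by $\phi''$, and obtains $\theta = \psi\circ\phi''$ of the explicit form $\theta(x) = \sum_{k=1}^{d}\omega_{\delta_{p(k)},\delta_{q(k)}}(x)\otimes y_k$ with $y_k \in C_h(G)\rtimes_\red G$. The kernel $k(s,t) = \langle\delta_s,\theta(st^{-1})\delta_t\rangle$ then has finite propagation \emph{exactly} (it is supported where $st^{-1}\in\{q(k)p(k)^{-1}\}$) and vanishing variation on diagonals \emph{exactly}, the latter because each diagonal slice is a finite sum of matrix-coefficient functions $t\mapsto\langle\delta_{gt},y_k\,\delta_t\rangle$ of elements of $C_h(G)\rtimes_\red G$; these visibly have vanishing variation on the dense subalgebra of finite sums $\sum f_h\lambda_h$ with $f_h\in C_h(G)$, and the property passes to norm limits since vanishing variation is closed under uniform convergence. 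To rescue your version you would have to arrange the truncation so that the resulting kernel still arises from matrix coefficients of elements of $C_h(G)\rtimes_\red G$; as written, the step does not go through. (Your (d)$\Rightarrow$(a) is fine in outline --- the paper cites Anantharaman-Delaroche for exactly this equivalence --- though producing $\mathrm{prob}(G)$-valued maps from positive type kernels needs the standard groupoid machinery rather than a bare Kolmogorov decomposition.)
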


As we will discuss in \cref{rem_between_amenable_exact}, the above result shows that the condition of acting amenably on its Higson compactification sits naturally between amenability and exactness of the group. Because both amenability of exactness have profound implications for the Baum--Connes conjecture for the group (amenability implies bijectivity whereas exactness implies injectivity of the assembly map), the question arises whether one can prove something in this direction for groups acting amenably on their Higson compactifications. We will pursue this line of thought in \cref{sec_isomorphism_results}, and our main result in this direction is the following:
\begin{thm}[\cref{prop_biexact_SESsplit}]
\label{thm_biexact_SESsplit}
Let $G$ be a bi-exact group and assume that it admits a $G$-finite classifying space for proper $G$-actions $\EG$. Then we have the split short exact sequence
\begin{equation}
0 \to K_{*+1}(\sHigComRed \EG \rtimes_\red G) \to K_*(C^*_\red(G)) \to K_*(\sHigCom(\EG) \rtimes_\red G) \to 0.
\end{equation}
Further, the Baum--Connes conjecture for trivial coefficients $\IC$ and coefficients $\sHigComRed \EG$ are equivalent to each other for $G$ and imply the isomorphism
\begin{equation}
K_*(C^*_\red(G)) \xrightarrow{\cong} K_*(\sHigCom(\EG) \rtimes_\red G),
\end{equation}
which is induced from the inclusion of $\cK$ as the constant functions in $\sHigCom(\EG)$.
\end{thm}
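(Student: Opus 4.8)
The plan is to view the Baum--Connes assembly map $\mu^\BC$ as a natural transformation from the topological groups $\Ktop_*(G;\blank)$ to $\K_*(\blank \rtimes_\red G)$ and to run it along the defining extension of the unreduced stable Higson compactification, feeding in two structural facts: that $\sHigCom(\EG)$ is an amenable $G$-\textCstar-algebra---which holds by \cref{thm_equiv_amenable_stable_compactification}, since $G$ is bi-exact and $\EG$ is coarsely equivalent to $G$ (using $G$-finiteness)---and that the reduced compactification is $\K$-theoretically flabby. The backbone is the short exact sequence of $G$-\textCstar-algebras
\begin{equation*}
0 \to \cK \to \sHigCom(\EG) \to \sHigComRed\EG \to 0,
\end{equation*}
in which $\cK$, with the trivial $G$-action, is the ideal of constant functions, so that $\cK \rtimes_\red G \cong \cK \tens \CstarRed(G)$ and $\K_*(\cK \rtimes_\red G) \cong \K_*(\CstarRed(G))$ by stability.

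For the split sequence I would apply $\blank \rtimes_\red G$; exactness of $G$ (a consequence of bi-exactness) keeps the extension exact, and its six-term sequence already contains the three groups of the assertion, with $\K_*(\CstarRed(G)) \to \K_*(\sHigCom(\EG)\rtimes_\red G)$ induced by the constant-function inclusion. To collapse this six-term sequence into short exact pieces I would compare it, via naturality of $\mu^\BC$, with the topological six-term sequence of the same extension. The two inputs are (i) $\Ktop_*(G;\sHigComRed\EG) = 0$ (flabbiness of the reduced stable Higson compactification on $G$-cocompact subsets, the same Eilenberg-swindle phenomenon that makes the Emerson--Meyer co-assembly map well defined), and (ii) that $\mu^\BC$ with coefficients in the amenable algebra $\sHigCom(\EG)$ is an isomorphism. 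Given (i), the topological quotient map $q^{\toprm}_*$ vanishes (its target is zero), so the naturality square $q_* \circ \mu^\BC_{\sHigCom(\EG)} = \mu^\BC_{\sHigComRed\EG} \circ q^{\toprm}_*$ shows that the analytic quotient map $q_*$ vanishes after precomposition with the surjection $\mu^\BC_{\sHigCom(\EG)}$; hence $q_* = 0$, which is exactly the statement that the six-term sequence breaks apart as claimed. A canonical splitting is the composite $\K_*(\sHigCom(\EG)\rtimes_\red G) \xrightarrow{(\mu^\BC_{\sHigCom(\EG)})^{-1}} \Ktop_*(G;\sHigCom(\EG)) \cong \Ktop_*(G;\cK) \xrightarrow{\mu^\BC} \K_*(\CstarRed(G))$, where the middle isomorphism is again forced by (i); naturality shows this is a section of the constant-function map, and notably it does not require the assembly map for $\C$ itself to be invertible.

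For the second assertion I would reuse the same ladder of six-term sequences. Because the middle vertical map $\mu^\BC_{\sHigCom(\EG)}$ is an isomorphism by (ii), the five lemma forces the assembly map for $\C$ (equivalently for $\cK$) to be an isomorphism if and only if the assembly map for $\sHigComRed\EG$ is; this is the asserted equivalence of the two Baum--Connes conjectures. When they hold, $\mu^\BC_{\sHigComRed\EG}$ is an isomorphism and (i) gives $\K_*(\sHigComRed\EG \rtimes_\red G) \cong \Ktop_*(G;\sHigComRed\EG) = 0$, so the kernel in the split sequence vanishes and the constant-function map becomes the claimed isomorphism $\K_*(\CstarRed(G)) \xrightarrow{\cong} \K_*(\sHigCom(\EG)\rtimes_\red G)$.

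The main obstacle is input (ii): that the Baum--Connes assembly map with coefficients in $\sHigCom(\EG)$ is an isomorphism, which is where amenability is indispensable. I expect this from the Dirac--dual-Dirac machinery: a bi-exact group is exact and carries a $\gamma$-element, and amenability of the coefficient algebra should trivialise the difference between $\gamma$ and $1$ after tensoring, i.e.\ yield $\gamma \tens \id_{\sHigCom(\EG)} = \id$ in the relevant equivariant $\KK$-group, through the approximately invariant means provided by the amenable action. Carefully establishing this identity---and checking the flabbiness (i) in the equivariant, $G$-cocompact setting, together with the mild point that the backbone extension is equivariantly semisplit so that both six-term sequences exist---are the technical hearts of the argument; the remainder is naturality of assembly and the homological algebra of the two six-term sequences.
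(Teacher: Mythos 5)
Your overall strategy is the same as the paper's: compare the analytic and topological six-term sequences of an extension linking $\sHigCom(\EG)$ and $\sHigComRed\EG$, feed in the vanishing of $K_*^\toprm(G;\sHigComRed\EG)$ together with the fact that assembly with coefficients in the amenable algebra $\sHigCom(\EG)$ is an isomorphism, and read off the splitting. But your backbone extension
\[
0 \to \cK \to \sHigCom(\EG) \to \sHigComRed\EG \to 0
\]
does not exist, and this is a genuine gap rather than a typo: the constant $\cK(H)$-valued functions form a subalgebra of $\sHigCom(\EG)$ but not an ideal (multiplying a constant function by a non-constant element of $\sHigCom(\EG)$ destroys constancy), and the containment between the two compactifications goes the other way --- $\sHigCom(\EG)$ is an ideal \emph{inside} $\sHigComRed\EG$, so there is no surjection from the unreduced algebra onto the reduced one. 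The correct extension, which the paper uses, is
\[
0 \to \sHigCom(\EG) \to \sHigComRed\EG \to \cQ \to 0
\]
with $\cQ$ the Calkin algebra carrying the trivial $G$-action; the group $K_*(C^*_\red(G))$ then enters only through the dimension-shifting isomorphism $K_{*+1}(\cQ\rtimes_\red G)\cong K_*(\cK\rtimes_\red G)$ coming from $0\to\cK\to\cB\to\cQ\to 0$, and the surjection onto $K_*(\sHigCom(\EG)\rtimes_\red G)$ in the split sequence is a boundary map (that it agrees with the constant-function inclusion has to be checked separately, as in Emerson--Meyer). With this replacement the rest of your argument goes through essentially as you describe: the map that must vanish is the inclusion-induced $K_*(\sHigCom(\EG)\rtimes_\red G)\to K_*(\sHigComRed\EG\rtimes_\red G)$, which factors through $K_*^\toprm(G;\sHigComRed\EG)=0$ after inverting the assembly isomorphism for $\sHigCom(\EG)$.

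Two smaller points. For your input (i), the paper does not invoke an Eilenberg swindle directly but derives $K_*^\toprm(G;\sHigComRed\EG)\cong 0$ in \cref{prop_LHS_BC_sHigComRed} from the bijectivity of the Emerson--Meyer co-assembly map applied to the extension $0\to C_0(\EG)\otimes\cK\to\sHigComRed\EG\to\sHigCorRed G\to 0$; whichever route you take, this step needs an actual argument, not just the slogan of flabbiness. For your input (ii), the paper does not re-prove the assembly isomorphism for $\sHigCom(\EG)$ via a $\gamma$-element computation; it quotes the known result that assembly is an isomorphism for amenable $(h\EG\rtimes G)$-$C^*$-algebras, using \cref{prop_equiv_amenable_stable_compactification} to see that $\sHigCom(\EG)$ is such an algebra. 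Your proposed Dirac--dual-Dirac derivation is plausible but would require establishing $\gamma\tens\id_{\sHigCom(\EG)}=\id$, which you leave open, so as written this input is assumed rather than proved.
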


Our final result is a computation of $K_*(\sHigCor(\EG) \rtimes_\red G)$ for Gromov hyperbolic groups (note that this result now refers to the stable Higson corona, contrary to the compactification as above in \cref{thm_biexact_SESsplit}):
\begin{thm}[\cref{ex_gromov_iso}]
Let $G$ be a finitely generated, Gromov hyperbolic group. Then we have an isomorphism
\begin{equation}
K_*(C(\partial G) \rtimes_\red G) \cong K_*(\sHigCor G \rtimes_\red G),
\end{equation}
where $\partial G$ denotes the Gromov boundary of $G$.
\end{thm}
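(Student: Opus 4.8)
The plan is to realise the claimed isomorphism as the effect on reduced crossed products of an explicit equivariant comparison map, and to verify it by comparing two extensions and invoking \cref{thm_biexact_SESsplit}. First I would replace the group by its Rips complex $\EG$: this is a $G$-finite model for the classifying space for proper actions and is $G$-equivariantly coarsely equivalent to $G$, so by coarse invariance of the stable Higson corona we have $\sHigCor G \cong \sHigCor(\EG)$ as $G$-$C^*$-algebras, while the Gromov boundary of $\EG$ is again $\partial G$. Since $G$ is hyperbolic, the Gromov compactification $\overline{\EG} = \EG \cup \partial G$ is Higson-dominated, i.e.\ the coordinate functions have vanishing variation; restricting to $\EG$ and stabilising by $\cK$ therefore yields a $G$-equivariant $*$-homomorphism $C(\overline{\EG}) \tens \cK \to \sHigCom(\EG)$ which is the identity on the common ideal $C_0(\EG)\tens\cK$ and which descends to a comparison map $c\colon C(\partial G)\tens\cK \to \sHigCor(\EG)$.

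Next I would apply the reduced crossed product (exact, as hyperbolic groups are exact) to the two extensions
\begin{gather*}
0 \to C_0(\EG)\tens\cK \to C(\overline{\EG})\tens\cK \to C(\partial G)\tens\cK \to 0,\\
0 \to C_0(\EG)\tens\cK \to \sHigCom(\EG) \to \sHigCor(\EG) \to 0,
\end{gather*}
obtaining a map of six-term exact sequences that is the identity on the ideal terms. By the five lemma, $c$ induces an isomorphism on the $K$-theory of reduced crossed products --- which, by stability of $K$-theory, is precisely the desired statement $K_*(C(\partial G)\rtimes_\red G)\cong K_*(\sHigCor G\rtimes_\red G)$ --- as soon as the induced map on the middle terms $\iota_*\colon K_*(C(\overline{\EG})\rtimes_\red G) \to K_*(\sHigCom(\EG)\rtimes_\red G)$ is an isomorphism.

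To see that $\iota_*$ is an isomorphism I would factor it through the inclusions of constant functions, which are compatible: the constant inclusion $\cK \hookrightarrow \sHigCom(\EG)$ factors as $\cK \to C(\overline{\EG})\tens\cK \xrightarrow{\iota} \sHigCom(\EG)$. Since $G$ is hyperbolic it is bi-exact and satisfies the Baum--Connes conjecture, so \cref{thm_biexact_SESsplit} tells us that the constant inclusion induces an isomorphism $K_*(C^*_\red(G)) \xrightarrow{\cong} K_*(\sHigCom(\EG)\rtimes_\red G)$. Hence $\iota_*$ is an isomorphism precisely when the remaining leg, the unit inclusion $\C \to C(\overline{\EG})$, induces an isomorphism $K_*(C^*_\red(G)) \xrightarrow{\cong} K_*(C(\overline{\EG})\rtimes_\red G)$.

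I expect this last point to be the main obstacle. The space $\overline{\EG}$ is compact and contractible, and crucially its restriction to any finite subgroup is equivariantly contractible, since finite subgroups of $G$ fix points of $\EG \subseteq \overline{\EG}$ by the defining property of the classifying space for proper actions; this should make $C(\overline{\EG})$ equivalent to $\C$ as a coefficient algebra over $\EG$, identifying $\RKK^G_*(\EG;\C)$ with $\RKK^G_*(\EG; C(\overline{\EG}))$. Feeding this identification through the Baum--Connes conjecture with coefficients, which holds for hyperbolic groups, yields the required isomorphism compatibly with the constant inclusion; amenability of the action of $G$ on $\partial G$ ensures that reduced and maximal crossed products agree on the boundary algebra and keeps all extensions exact. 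With the middle isomorphism established, the five lemma closes the argument.
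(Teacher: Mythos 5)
Your proposal is correct in outline and shares its skeleton with the paper's \cref{prop_iso_willett}: pass to the Rips complex $P$, compare the extension $0 \to C_0(P)\otimes\cK \to C(\bar P)\otimes\cK \to C(\partial G)\otimes\cK \to 0$ with $0 \to C_0(P)\otimes\cK \to \sHigCom P \to \sHigCor P \to 0$ over the common ideal, and reduce via the five lemma to showing that the middle comparison map is a $K$-theory isomorphism after taking $\rtimes_\red G$. Where you genuinely diverge is in that last step. The paper never passes through $K_*(C^*_\red(G))$: it converts both middle terms to their topological sides using assembly isomorphisms that hold \emph{because the relevant actions are amenable} (Tu/Higson--Kasparov type results, not Lafforgue), and then compares $K_*^\toprm(G;C(\bar P)\otimes\cK)$ with $K_*^\toprm(G;\sHigCom P)$ via the vanishing $K_*^\toprm(G;\sHigComRed P)\cong 0$ from \cref{prop_LHS_BC_sHigComRed} and a commuting square of boundary maps coming from the two Calkin-algebra extensions $0\to\cK\to\cB\to\cQ\to 0$ and $0\to\sHigCom P\to\sHigComRed P\to\cQ\to 0$. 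You instead factor the constant inclusion $\cK\to\sHigCom(\EG)$ through $C(\overline{\EG})\otimes\cK$ and apply a two-out-of-three argument, using \cref{prop_biexact_SESsplit} for one leg and Baum--Connes with coefficients (Lafforgue) for the other. Both routes work for hyperbolic groups; yours is arguably shorter given \cref{prop_biexact_SESsplit}, but it consumes the full strength of the Baum--Connes conjecture, whereas the paper's \cref{prop_iso_willett} holds under the weaker hypotheses of exactness plus amenability of the boundary action, with Lafforgue invoked only for the separate isomorphism \eqref{eq_biexact_BC}.

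One justification in your last paragraph is too weak as stated: the existence of fixed points for finite subgroups $H<G$ in $\EG$ does not by itself give that $\overline{\EG}$ is $H$-equivariantly contractible, which is what you need to identify $\RKK^G_*(\EG;\C)$ with $\RKK^G_*(\EG;C(\overline{\EG}))$. The paper derives equivariant contractibility of the Gromov compactification of the Rips complex from the Whitehead theorem for families; you should cite or reproduce an argument of that kind rather than rely on the fixed-point property alone.
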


\section{Bi-exact groups}
\label{sec_biexact}

Let $G$ be a countable and discrete group. If needed, we will equip it without further mentioning with any proper, left-invariant metric.

\begin{defn}[Amenable actions, {\cite[Def.\ 2.1 \& 2.2]{higson_roe_exact} and \cite[Sec.\ 2.2]{anantharaman_renault}}]
\label{defn_amenable_action}
\mbox{}
\begin{enumerate}
\item For any countable set $Z$ we denote by $\prob(Z)$ the set of Borel probability measures on $Z$, i.e., the set of functions $b\colon Z \to [0,1]$ such that $\sum_{z \in Z} b(z) = 1$.

We view $\prob(Z)$ as a subset of $\ell^1(Z)$ and equip it with the weak-${}^*$-topology (recall that $\ell^1(Z)$ is the Banach space dual of $c_0(Z)$). By $\|\blank\|_1$ we denote the usual norm on $\ell^1(Z)$.

If $Z$ is equipped with an action of the group $G$, then the induced action of $G$ on $\prob(Z)$ is defined by $g.b(z) \coloneqq b(g^{-1}.z)$.
\item Let $X$ be a compact Hausdorff space on which $G$ acts by homeomorphisms. The action is called amenable if there exists a sequence of weak-${}^*$-continuous maps $b^n\colon X \to \prob(G)$ such that for every $g \in G$ we have
\[
\lim_{n \to \infty} \sup_{x \in X} \| g.b^n_x - b^n_{g.x}\|_1 = 0\,.\qedhere
\]
\end{enumerate}
\end{defn}

\begin{defn}[Bi-exact groups {\cite[Ch.~15]{BrownOzawaCstarFiniteDim}}]
\label{def_biexact}
We consider the $G \times G$-action on the group $G$ given by left and right translations.

We call $G$ \emph{bi-exact} if the induced action of $G \times G$ on the Stone-\v{C}ech boundary $\partial_\beta G$ is amenable.
\end{defn}

\begin{rem}
It is known that $G$ is bi-exact if and only if it acts amenably on its Higson corona (which will be defined in \cref{sec_vanishing_var}) \cite[Prop.~15.2.3]{BrownOzawaCstarFiniteDim}. Because the latter is the class of groups that we consider in this paper, everything we prove here holds for bi-exact groups.
\end{rem}

\begin{ex}
The following groups are bi-exact:
\begin{enumerate}
\item Amenable groups
\item Groups hyperbolic relative to a family of amenable subgroups \cite[Prop.~12]{Ozawa2006}
\item Discrete subgroups of connected, simple Lie groups of rank one \cite[Sec.~4]{skandalis_biexact}
\item $\IZ^2 \rtimes \mathrm{SL}(2,\IZ)$ \cite{ozawa_example}
\end{enumerate}
The class of bi-exact groups also enjoys the following closure properties:
\begin{enumerate}
\item It is closed under passage to subgroups.
\item It is closed under free products (with finite amalgamation) \cite[Thm.~3.3]{ozawa_amenactions}
\item Wreath products $\Upsilon \wr G$, where $\Upsilon$ is amenable and $G$ bi-exact, are again bi-exact \cite[Cor.~15.3.9]{BrownOzawaCstarFiniteDim}.
\item It is closed under measure equivalence \cite{sako_ME}.\qedhere
\end{enumerate}
\end{ex}

\begin{rem}
Let us note that the class of bi-exact groups is in general not closed under products: The group $\IZ \times \mathrm{SL}(2,\IZ)$ is not bi-exact.

It follows that $\mathrm{SL}(3,\IZ)$ is not bi-exact either: The group $\IZ \times \mathrm{SL}(2,\IZ)$ ME-embeds into it\footnote{The ME-embedding comes from a discrete
embedding of $\IZ \times \mathrm{SL}(2,\IZ)$ into $\mathrm{SL}(3,\IR)$.} and hence by Sako's result\footnote{In the version that if $G$ ME-embeds into a bi-exact group, then $G$ is bi-exact.} \cite{sako_ME} bi-exactness of $\mathrm{SL}(3,\IZ)$ would propagate to $\IZ \times \mathrm{SL}(2,\IZ)$.

Weakly amenable groups are in general not bi-exact, because weak amenability is, contrary to bi-exactness, closed under direct products. For example, a product of free groups is weakly amenable but not bi-exact.
\end{rem}

\begin{rem}
The importance of bi-exact groups stems from the fact that the group von Neumann algebra of a bi-exact group is solid \cite{ozawa_solid}.
\end{rem}

\section{Amenable actions on the Higson compactification}

\subsection{Functions of vanishing variation}
\label{sec_vanishing_var}

\begin{defn}[Higson compactification and corona]
Let $Y$ be any metric space. If $\vartheta\colon Y \to M$ is a map to another metric space $M$, then for each $r > 0$ the $r$-variation of $\vartheta$ is defined as the function
\[
\mathrm{Var}_r \vartheta\colon Y \to [0,\infty)\,,\quad y \mapsto \sup\{d(\vartheta(y),\vartheta(x)) \colon x \in Y \text{ with } d(y,x) \le r\}\,.
\]
The function $\vartheta$ is said to have vanishing variation if for all $r>0$ the $r$-variation $\mathrm{Var}_r \vartheta$ converges to zero at infinity.\footnote{If this is defined to mean that for every $\varepsilon > 0$ exists a compact subset $K \subset Y$ with $|\mathrm{Var}_r \vartheta(y)| < \varepsilon$ for all $y \in Y \setminus K$, then one should assume $Y$ to be locally compact. If $Y$ is not locally compact, then one should instead demand $K$ to be just bounded.}

The \emph{Higson compactification $hY$} of $Y$ is the Gelfand dual of the $C^*$-algebra of all complex-valued, bounded, continuous functions of vanishing variation on $Y$. The \emph{Higson corona $\partial_h Y$} is defined as the complement $hY \setminus Y$.
\end{defn}

Because the Higson compactification $hG$ is a compact Hausdorff space, acting amenably on it implies that $G$ is exact in the usual sense (\cite{OZAWA2000691,higson_roe_exact} and also \cite[Thm.\ 5.3]{BEW_amenable}).

In the case of the Stone--\v{C}ech compactification and corona the following lemma is well-known, see e.g.\ \cite[Prop.\ 4.4]{BCL_weaklyamen_exact}.
\begin{lem}\label{lem_equiv_amenable_action_compactification_corona}
The group $G$ acts amenably on its Higson compactification $hG$ if and only if it acts amenably on its Higson corona $\partial_h G$.
\end{lem}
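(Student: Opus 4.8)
The plan is to treat the two implications separately, exploiting that $\partial_h G = hG \setminus G$ is a closed $G$-invariant subspace of the compact space $hG$ whose complement is the open $G$-invariant set $G$, acting on itself by left translation. For the implication from $hG$ to $\partial_h G$ I would simply restrict witnesses: if $b^n \colon hG \to \prob(G)$ are weak-$*$-continuous maps with $\lim_{n\to\infty} \sup_{x \in hG}\|g.b^n_x - b^n_{g.x}\|_1 = 0$ for each $g \in G$, then the restrictions $b^n|_{\partial_h G}$ are again weak-$*$-continuous and the supremum over the closed invariant set $\partial_h G$ is dominated by the supremum over $hG$; hence the restricted maps witness amenability of $G \curvearrowright \partial_h G$. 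Equivalently, amenability of the commutative $G$-\textCstar-algebra $C_h(G) = C(hG)$ passes to the $G$-equivariant quotient $C(\partial_h G)$ by the hereditary properties established in \cite{BEW_amenable}.

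For the converse I would combine the given amenability on the corona with the tautological amenability of the translation action of $G$ on itself. Concretely, $G \curvearrowright G$ is amenable via the constant (in $n$) sequence $h \mapsto \delta_h$, since $g.\delta_h = \delta_{gh} = \delta_{g.h}$ forces the defect to vanish identically; equivalently, the ideal $c_0(G) \trianglelefteq C_h(G)$ is an amenable $G$-\textCstar-algebra. With the quotient $C(\partial_h G)$ amenable by hypothesis, the algebra $C_h(G)$ fits into the $G$-equivariant extension $0 \to c_0(G) \to C_h(G) \to C(\partial_h G) \to 0$, and I would invoke the closure of amenability of $G$-\textCstar-algebras under extensions \cite{BEW_amenable} to conclude that $C_h(G)$, and hence the action $G \curvearrowright hG$, is amenable.

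The main obstacle lies entirely in this last step, and it is instructive to see why a naive gluing of the two witnesses fails. Choosing a vanishing-variation cutoff $\theta^n \in C_h(G)$ with $\theta^n \to 1$ at infinity and a continuous extension $\tilde c^n \colon hG \to \prob(G)$ of the corona witness, one is tempted to set $b^n_g = (1-\theta^n(g))\,\delta_g + \theta^n(g)\,\tilde c^n_g$ for $g \in G$ and $b^n_\omega = c^n_\omega$ for $\omega \in \partial_h G$. This is genuinely continuous on $hG$ (the point masses $\delta_g$ escape weak-$*$ to $0$ exactly where $\theta^n \to 1$, matching the full-mass corona measures), and the cutoff contributes only the harmless term $|\theta^n(sg) - \theta^n(g)| \le \max(|s|,|s^{-1}|)/t_n \to 0$. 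However, the interior error also contains $\theta^n(g)\,\|s.\tilde c^n_g - \tilde c^n_{sg}\|_1$, and since a point $g$ and its translate $sg$ need \emph{not} become close in the Higson uniformity as $g \to \infty$ (their distance $d(g,sg) = |g^{-1}sg|$ is unbounded), no generic continuous extension is asymptotically equivariant on $G$. Controlling this term is precisely the content of the extension-permanence theorem, whose proof carries out the required averaging, and this is where I expect the real work to be.
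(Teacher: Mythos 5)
Your argument is correct, but the second implication takes a genuinely different route from the paper's. The forward direction (restricting the witnesses $b^n$ to the closed invariant subset $\partial_h G$) is exactly the paper's argument, which cites the general fact that amenability pulls back along equivariant continuous maps of compact $G$-spaces. For the converse, however, the paper first deduces exactness of $G$ from the amenable action on the compact corona, then invokes the Matsumura/Buss--Echterhoff--Willett theorem to translate amenability of a commutative $G$-\textCstar-algebra over an exact group into the weak containment property, and finally runs a five-lemma argument on the two exact sequences of maximal and reduced crossed products associated with $0 \to C_0(G) \to C(hG) \to C(\partial_h G) \to 0$ (using that $C_0(G)\rtimes_{\max}G \to C_0(G)\rtimes_{\red}G$ is always an isomorphism). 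You instead stay entirely at the level of amenability of $G$-\textCstar-algebras and use permanence under extensions: $c_0(G)$ is (even strongly) amenable because the translation action of $G$ on itself is proper --- explicit witnesses are $\theta_n(g) = \chi_{E_n}\cdot\alpha_g(\chi_{E_n})$ for an exhaustion $(E_n)$ of $G$ by finite sets, which is a point worth spelling out since the delta-measure argument only gives topological amenability of the action on the locally compact space $G$ --- and the quotient $C(\partial_h G)$ is amenable by hypothesis, so $C(hG)$ is amenable by the extension permanence established in \cite{BEW_amenable}. That permanence result is proved via the $G$-equivariant decomposition $C(hG)^{\ast\ast} \cong c_0(G)^{\ast\ast}\oplus C(\partial_h G)^{\ast\ast}$ and orthogonal direct sums of positive type functions into the center of the double dual, which is precisely the mechanism that dissolves the gluing obstruction you correctly identify at the end (the unboundedness of $d(g,sg)$ makes a naive continuous interpolation of witnesses on $hG$ fail). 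Your route is the more direct one, bypassing exactness, Matsumura's theorem and the five lemma entirely; the paper's route can be viewed as the weak-containment shadow of the same short exact sequence and has the advantage of setting up the crossed-product reformulations used in \cref{lem_amenably_containment} and later sections. Both are valid proofs.
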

\begin{proof}
It is clear from the definition of amenable actions that if $G$ acts amenably on a compact space $X$ and if $Y \to X$ is any equivariant and continuous map of compact $G$-spaces, then the action of $G$ on $Y$ is also amenable \cite[Exer.\ 4.4.3]{BrownOzawaCstarFiniteDim}. Applying this to the inclusion map $\partial_h G \to hG$, we conclude that if $G$ acts amenably on its Higson compactification then it also acts amenably on its Higson corona.

To prove the reverse implication, we assume that $G$ acts amenably on its Higson corona. Since this is a compact Hausdorff space, $G$ is exact. Now we use \cite{matsumura} and \cite[Thm.\ 5.2]{BEW_amenable}: If $G$ is a discrete and exact group and $A$ is a commutative $G$-$C^*$-algebra then the action of $G$ on $A$ being amenable\footnote{Meaning that the action of $G$ on the Gelfand dual of $A$ is amenable \cite[Rem.\ 2.2]{BEW_amenable}.} is equivalent to the canonical quotient map $A \rtimes_{\max} G \to A \rtimes_{\red} G$ being an isomorphism.
Hence to prove that $G$ acts amenably on its Higson compactification it suffices to show that the equality $C(\partial_h G) \rtimes_{\max} G = C(\partial_h G) \rtimes_{\red} G$ implies $C(hG) \rtimes_{\max} G = C(hG) \rtimes_{\red} G$. To this end we consider the commutative diagram
\begin{align*}
\xymatrix{
0 \ar[r] & C_0(G) \rtimes_{\max} G \ar[r] \ar[d]^-{\cong} & C(hG) \rtimes_{\max} G \ar[r] \ar[d] & C(\partial_h G) \rtimes_{\max}G \ar[r] \ar[d]^-{\cong} & 0\\
0 \ar[r] & C_0(G) \rtimes_{\red} G \ar[r] & C(hG) \rtimes_{\red} G \ar[r] & C(\partial_h G) \rtimes_{\red}G \ar[r] & 0
}
\end{align*}
where
\begin{itemize}
\item the top row is exact since $-\rtimes_{\max}$ is an exact functor,
\item the bottom row is exact since we already know that $G$ is exact,
\item the vertical maps are the canonical quotient maps from the maximal to the reduced crossed product,
\item the left vertical map is always an isomorphism (i.e.\ without any assumptions on the group $G$), see e.g.\ \cite[Rem.\ 3.4.16]{echterhoff_KK_BC_overview}, and
\item the right vertical map is an isomorphism by assumption.
\end{itemize}
The five lemma implies that the middle vertical map is also an isomorphism.
\end{proof}

In the previous proof we used the equivalence of acting amenably on a compact Hausdorff space and the weak containment property\footnote{That is to say, the resp.\ maximal and reduced crossed products are isomorphic to each other.} for the Gelfand dual together with exactness of the group. For completeness, we state this now explicitly in the following lemma:
\begin{lem}\label{lem_amenably_containment}
Let $G$ be a countable, discrete group.
\begin{enumerate}
\item\label{item_amenably_compactification} The following are equivalent:
\begin{itemize}
\item $G$ acts amenably on its Higson compactification.
\item $C(hG) \rtimes_{\max} G = C(hG) \rtimes_{\red} G$ and $G$ is exact.
\end{itemize}
\item\label{item_amenably_corona} The following are equivalent:
\begin{itemize}
\item $G$ acts amenably on its Higson corona.
\item $C(\partial_h G) \rtimes_{\max} G = C(\partial_h G) \rtimes_{\red} G$ and $G$ is exact.
\end{itemize}
\end{enumerate}
(Note that by \cref{lem_equiv_amenable_action_compactification_corona} the \cref{item_amenably_compactification,item_amenably_corona} are equivalent to each other.)
\end{lem}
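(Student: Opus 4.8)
The plan is to deduce both equivalences as direct applications of the characterization of amenable actions via weak containment that was already invoked in the proof of \cref{lem_equiv_amenable_action_compactification_corona}. The key input is the theorem of Matsumura together with \cite[Thm.\ 5.2]{BEW_amenable}: for a discrete exact group $G$ and a \emph{commutative} $G$-\textCstar-algebra $A$, the action of $G$ on the Gelfand dual of $A$ is amenable if and only if the canonical quotient map $A \rtimes_{\max} G \to A \rtimes_{\red} G$ is an isomorphism. Since $C(hG)$ and $C(\partial_h G)$ are commutative $G$-\textCstar-algebras whose Gelfand duals are $hG$ and $\partial_h G$ respectively, this theorem is tailor-made for our two statements; the only point requiring care is the bookkeeping of the exactness hypothesis, which enters as a standing assumption of the cited theorem.

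For \cref{item_amenably_compactification} I would argue as follows. For the forward implication, suppose $G$ acts amenably on $hG$. Because $hG$ is a compact Hausdorff space, amenability of the action already forces $G$ to be exact \cite[Thm.\ 5.3]{BEW_amenable}, which supplies the exactness half of the right-hand condition; with exactness in hand I apply the Matsumura--BEW theorem to $A = C(hG)$ to conclude that $C(hG) \rtimes_{\max} G \to C(hG) \rtimes_{\red} G$ is an isomorphism. Conversely, suppose the quotient map is an isomorphism and $G$ is exact. Then exactness is precisely the hypothesis needed to run the theorem in the opposite direction, and it yields amenability of the action on $hG$, the Gelfand dual of $C(hG)$.

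For \cref{item_amenably_corona} the argument is verbatim the same with $\partial_h G$ in place of $hG$ and $A = C(\partial_h G)$: the corona is again compact Hausdorff, so amenability of the action forces exactness, and the theorem converts amenability of the action into the isomorphism $C(\partial_h G) \rtimes_{\max} G \to C(\partial_h G) \rtimes_{\red} G$ and back. Finally, the parenthetical claim that the two parts are equivalent is immediate from \cref{lem_equiv_amenable_action_compactification_corona}, which identifies amenability of the action on the compactification with amenability of the action on the corona. The main ``obstacle'' here is not mathematical but organizational: essentially all content is imported from \cite{matsumura} and \cite[Thm.\ 5.2]{BEW_amenable}, and the lemma merely records the specialization to the two Higson spaces, so the work amounts to checking that the hypotheses (discreteness, commutativity of the coefficient algebra, and the correct placement of the exactness assumption) are met in each direction.
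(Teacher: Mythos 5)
Your proposal is correct and follows essentially the same route as the paper, which states this lemma without a separate proof precisely because it is the specialization of the Matsumura/\cite[Thm.~5.2]{BEW_amenable} characterization (together with the fact that an amenable action on a compact Hausdorff space forces exactness) already invoked in the proof of \cref{lem_equiv_amenable_action_compactification_corona}. Your bookkeeping of where the exactness hypothesis enters in each direction is exactly the intended content.
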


\subsection{Groups acting amenably on the stable Higson compactification, respectively on the stable Higson corona}
\label{sec_amenable_action_stable_comp}

For a discrete group $G$ we will first recall the different notions of amenability of $G$-\textCstar-algebras from \cite[Def.~2.1~\&~4.13]{BEW_amenable} and then apply them to the (unreduced) stable Higson compactification, resp.\ corona (see \cref{defn_stable_Higson_comp_corona} below).
Note that there are also variants of some of these notions occuring in, e.g., \cite{anantharaman,BrownOzawaCstarFiniteDim}. How these variants are related to each other is explained in \cite[Rem.~2.2]{BEW_amenable}.

\begin{defn}\label{defn_amenabilities}
Let $G$ be a discrete group.
\begin{enumerate}
\item\label{item_strongly_amenable} The $G$-\textCstar-algebra $A$ is called strongly amenable if there is a net
\[
(\theta_i\colon G \to Z\mathcal{M}(A))_{i \in I}\,,
\]
where $Z\mathcal{M}(A)$ is the center of the multiplier algebra, of positive type functions\footnote{In general, a function $\vartheta\colon G \to B$ is of positive type if for any finite subset $\{g_1, \ldots, g_n\}$ of $G$ the matrix $(\alpha_{g_i}(\vartheta(g_i^{-1} g_j)))_{i,j} \in M_n(B)$ is positive, where $\alpha$ is the action of $G$ on $B$ \cite[Def.~2.1]{anantharaman_annalen}.} such that
\begin{itemize}
\item each $\theta_i$ is finitely supported,
\item for each $i$ we have $\theta_i(e) \le 1$, and
\item for each $g \in G$ we have $\theta_i(g) \to 1$ strictly as $i \to \infty$.
\end{itemize}
\item\label{item_amenable} The $G$-\textCstar-algebra $A$ is called amenable if there is a net $(\theta_i\colon G \to Z(A^\ast{}^\ast))_{i \in I}$ of positive type functions such that
\begin{itemize}
\item each $\theta_i$ is finitely supported,
\item for each $i$ we have $\theta_i(e) \le 1$, and
\item for each $g \in G$ we have $\theta_i(g) \to 1$ ultra-weakly as $i \to \infty$.\footnote{Recall that a net $(T_\lambda)_{\lambda \in \Lambda}$ in $A^\ast{}^\ast$ converges ultra-weakly to $T$ if and only if $(T_\lambda(\varphi))_{\lambda \in \Lambda}$ converges to $T(\varphi)$ for every $\varphi \in A^\ast$.}
\end{itemize}
\item\label{item_commutant_amenable} The $G$-\textCstar-algebra $A$ is called commutant amenable if for every covariant pair $(\pi,u)\colon (A,G) \to \Lin(H)$ there exists a net $(\theta_i\colon G \to \pi(A)')_{i \in I}$ of positive type functions such that 
\begin{itemize}
\item each $\theta_i$ is finitely supported,
\item for each $i$ we have $\theta_i(e) \le 1$, and
\item for each $g \in G$ we have $\theta_i(g) \to 1$ ultra-weakly as $i \to \infty$.
\end{itemize}
\end{enumerate}
Note that strong amenability implies amenability \cite[Rem.~2.2]{BEW_amenable}, and amenability implies commutant amenability \cite[Rem.\ 4.14]{BEW_amenable}.
\end{defn}

The main players of this section are the (unreduced\footnote{There are also reduced versions of these $C^*$-algebras, but the main results of the present section (\cref{prop_equiv_amenable_stable_compactification,prop_equiv_amenable_stable_corona}) do not hold for them; see the next \cref{sec_reduced_case} for a discussion.}) stable Higson compactification $\sHigCom G$, resp.\ corona $\sHigCor G$ of $G$ as defined in the following \cite[Defn.\ 3.2]{EmeMey}:
\begin{defn}\label{defn_stable_Higson_comp_corona}
Let $G$ be a countable and discrete group, and equip it with any proper, left-invariant metric.\footnote{Since any two such metrics are coarsely equivalent to each other, the defined $C^*$-algebras are independent of this choice.} Fix any separable Hilbert space $H$.

The \emph{(unreduced) stable Higson compactification $\sHigCom G$} is the $C^*$-algebra of all bounded (continuous) functions of vanishing variation $f\colon G \to \cK(H)$.

The \emph{(unreduced) stable Higson corona $\sHigCor G$} is defined as the quotient
\[
\sHigCor G \coloneqq \sHigCom G / C_0(G,\cK(H))\,.\qedhere
\]
\end{defn}

\noindent The following answers the variant of \cite[Ques.\ 5.10]{anastructinj} for the unreduced stable Higson compactification in the case of the metric space acted on by $G$ being the group $G$ itself:
\begin{prop}\label{prop_equiv_amenable_stable_compactification}
Let $G$ be a countable and discrete group. Then the following are equivalent to each other:
\begin{enumerate}
\item\label{item_question_amenable_one} The group $G$ acts amenably on its Higson compactification $hG$.
\item\label{item_question_amenable_two} $\sHigCom G$ is an amenable ($hG \rtimes G$)-\textCstar-algebra.\footnote{That is to say, there is a $G$-equivariant, non-degenerate ${}^*$-homomorphism $\phi\colon C(hG) \to Z\mathcal{M}(\sHigCom G)$, and $G$ acts amenably on $hG$ \cite[Defn.\ 6.1]{anantharaman}.}
\item\label{item_question_amenable_three} $\sHigCom G$ is a strongly amenable $G$-\textCstar-algebra.
\item\label{item_question_amenable_four} $\sHigCom G$ is an amenable $G$-\textCstar-algebra.
\item\label{item_question_amenable_five} $\sHigCom G$ is a commutant amenable $G$-\textCstar-algebra.
\item\label{item_question_amenable_six} We have $\sHigCom G \rtimes_{\max} G \cong \sHigCom G \rtimes_{\red} G$ and $G$ is exact.
\end{enumerate}
\end{prop}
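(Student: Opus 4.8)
The organising principle is that $\sHigCom G$ is naturally a $G$-$C(hG)$-algebra, and my plan is to exploit this together with a single rank-one corner. First I would record the canonical map $\phi\colon C(hG)\to Z\cM(\sHigCom G)$ sending a scalar Higson function $h$ to the central multiplier $(hf)(g)=h(g)f(g)$; that $hf$ again has vanishing variation is immediate from $\|h(g)f(g)-h(x)f(x)\|\le|h(g)|\,\|f(g)-f(x)\|+\|f(x)\|\,|h(g)-h(x)|$ and boundedness. This map is $G$-equivariant, central, nondegenerate, and it always exists; hence by the definition quoted in its footnote, condition~\ref{item_question_amenable_two} is just condition~\ref{item_question_amenable_one} plus an automatic structure, giving \ref{item_question_amenable_one}$\Leftrightarrow$\ref{item_question_amenable_two} at once. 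The same map supplies the device I use throughout: for a fixed rank-one projection $e_{11}\in\cK(H)$ the constant multiplier $p\colon g\mapsto e_{11}$ is a $G$-invariant projection in $\cM(\sHigCom G)$ with $p\,\sHigCom G\,p\cong C(hG)$, the isomorphism reading off the $(1,1)$-matrix entry of $f$.

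For the forward implications I would argue as follows. To obtain \ref{item_question_amenable_two}$\Rightarrow$\ref{item_question_amenable_three} I feed the maps $b^n\colon hG\to\prob(G)$ witnessing amenability of the action into the standard Anantharaman--Delaroche recipe: with $\xi^n_x=(\sqrt{b^n_x(s)})_{s\in G}\in\ell^2(G)$ the functions $\theta_n(g)=\phi\big(x\mapsto\langle\xi^n_x,\,g.\xi^n_x\rangle\big)$ are finitely supported, of positive type, take values in $C(hG)\subseteq Z\cM(\sHigCom G)$, satisfy $\theta_n(e)\le1$, and converge strictly to $1$ because $\sup_x\|g.b^n_x-b^n_{g.x}\|_1\to0$; this is precisely strong amenability \cite{anantharaman,BEW_amenable}. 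The steps \ref{item_question_amenable_three}$\Rightarrow$\ref{item_question_amenable_four}$\Rightarrow$\ref{item_question_amenable_five} are the general implications recorded in \cref{defn_amenabilities}. Finally \cite{BEW_amenable} identifies commutant amenability with the weak containment property, so \ref{item_question_amenable_five} already yields the isomorphism $\sHigCom G\rtimes_{\max}G\cong\sHigCom G\rtimes_{\red}G$ of \ref{item_question_amenable_six}; moreover condition~\ref{item_question_amenable_one} forces the $G$-action on the compact space $hG$ to be amenable and hence $G$ to be exact \cite{higson_roe_exact}, supplying the exactness clause of~\ref{item_question_amenable_six}.

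The substance is the return implication \ref{item_question_amenable_six}$\Rightarrow$\ref{item_question_amenable_one}, and here the corner does the work. Since $p$ is a $G$-invariant multiplier projection, compression by $p$ commutes with both crossed products, giving $p\,(\sHigCom G\rtimes_\mu G)\,p\cong C(hG)\rtimes_\mu G$ for $\mu\in\{\max,\red\}$; cutting the canonical surjection $\sHigCom G\rtimes_{\max}G\to\sHigCom G\rtimes_{\red}G$ by $p$ then exhibits the canonical surjection $C(hG)\rtimes_{\max}G\to C(hG)\rtimes_{\red}G$ as its restriction, so injectivity of the former forces $C(hG)\rtimes_{\max}G\cong C(hG)\rtimes_{\red}G$. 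Together with exactness from~\ref{item_question_amenable_six}, \cref{lem_amenably_containment}\,\ref{item_amenably_compactification} (Matsumura's theorem \cite{matsumura}) yields amenability of the action on $hG$, closing the cycle. I expect the main obstacle to be exactly this descent to the commutative base and the bookkeeping of exactness: one must verify the compression identities for $\mu\in\{\max,\red\}$ with care, and remember that weak containment for a commutative $G$-$C^*$-algebra only gives an amenable action once $G$ is known to be exact -- which is why exactness is carried explicitly in~\ref{item_question_amenable_six} and recovered automatically in the other conditions. The full strength of the rank-one corner $p\,\sHigCom G\,p\cong C(hG)$ is what makes this two-way reduction possible.
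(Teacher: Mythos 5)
Your cycle does not close. You establish \ref{item_question_amenable_one}$\Leftrightarrow$\ref{item_question_amenable_two}, the chain \ref{item_question_amenable_two}$\Rightarrow$\ref{item_question_amenable_three}$\Rightarrow$\ref{item_question_amenable_four}$\Rightarrow$\ref{item_question_amenable_five}, and \ref{item_question_amenable_six}$\Rightarrow$\ref{item_question_amenable_one}; but your step ``\ref{item_question_amenable_five}$\Rightarrow$\ref{item_question_amenable_six}'' obtains the exactness clause of \ref{item_question_amenable_six} by invoking condition \ref{item_question_amenable_one}, which at that point in the cycle is not available. What you have actually shown is \ref{item_question_amenable_one}$\wedge$\ref{item_question_amenable_five}$\Rightarrow$\ref{item_question_amenable_six}, so conditions \ref{item_question_amenable_three}, \ref{item_question_amenable_four}, \ref{item_question_amenable_five} are proved to \emph{follow from} \ref{item_question_amenable_one} but never to imply it (or anything else). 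This cannot be patched for free: from \ref{item_question_amenable_five} alone you get weak containment for $\sHigCom G$, and your corner then gives $C(hG)\rtimes_{\max}G\cong C(hG)\rtimes_{\red}G$, but Matsumura's theorem needs exactness of $G$ as an input, and exactness is precisely what you have not yet derived from \ref{item_question_amenable_five}. The substantive content of the paper's proof is exactly the two return implications you omit: for \ref{item_question_amenable_four}$\Rightarrow$\ref{item_question_amenable_one} one builds a normal, unital conditional expectation $\psi\colon(\sHigCom G)^{\ast\ast}\to C(hG)^{\ast\ast}$ (from the vector state of the rank-one projection) and pushes the positive type functions $\theta_i\colon G\to Z((\sHigCom G)^{\ast\ast})$ down to $C(hG)^{\ast\ast}$, giving amenability of the commutative algebra $C(hG)$ and hence of the action; for \ref{item_question_amenable_five}$\Rightarrow$\ref{item_question_amenable_one} one must in addition produce a covariant pair to which commutant amenability can usefully be applied, which the paper does via Haagerup's standard form of $((\sHigCom G)^{\op})^{\ast\ast}$, yielding a representation $\rho$ with $\rho(\sHigCom G)'\cong(\sHigCom G)^{\ast\ast}$. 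Your \textCstar-level corner $p\,\sHigCom G\,p\cong C(hG)$ does not substitute for either of these: amenability and commutant amenability live in $(\sHigCom G)^{\ast\ast}$ and in commutants of representations, not in the algebra itself.

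Two smaller remarks. First, your route \ref{item_question_amenable_six}$\Rightarrow$\ref{item_question_amenable_one} (compression of the crossed products by the invariant multiplier projection $p$, then Matsumura plus exactness) is a genuinely different and workable alternative to the paper's path \ref{item_question_amenable_six}$\Rightarrow$\ref{item_question_amenable_five}$\Rightarrow$\ref{item_question_amenable_one}, provided you justify $p(\sHigCom G\rtimes_{\max}G)p\cong C(hG)\rtimes_{\max}G$; for the maximal crossed product this requires showing that every covariant representation of $(C(hG),G)$ extends to one of $(\sHigCom G,G)$ (e.g.\ by inducing over the corner), since an invariant \textCstar-subalgebra does not in general embed into the maximal crossed product. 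Beware also that $\sHigCom G\ne C(hG)\otimes\cK$, so arguments relying on $p$ being full need separate justification. Second, your direct construction for \ref{item_question_amenable_two}$\Rightarrow$\ref{item_question_amenable_three} is fine (it is the content of the lemma the paper cites), modulo the standard preliminary step of replacing the $b^n$ by uniformly finitely supported maps so that the $\theta_n$ are genuinely finitely supported.
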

\begin{proof}
To see the equivalence \textbf{\ref{item_question_amenable_one}$\boldsymbol\Leftrightarrow$\ref{item_question_amenable_two}} we need to provide the $G$-equivariant, non-degenerate map $\phi\colon C(hG) \to Z\mathcal{M}(\sHigCom G)$. To this end we observe that $\mathcal{M}(\sHigCom G)$ is the $G$-\textCstar-algebra of all bounded (continuous) functions of vanishing variation $G\to \cB(H)$, and hence $Z\mathcal{M}(\sHigCom G) \cong C(hG)$. Under this isomorphism the map $\phi$ is the identity.

The implication \textbf{\ref{item_question_amenable_two}$\boldsymbol\Rightarrow$\ref{item_question_amenable_three}} is a general fact \cite[Lem.\ 2.5]{BEW_amenable}, and the same is true for \textbf{\ref{item_question_amenable_three}$\boldsymbol\Rightarrow$\ref{item_question_amenable_four}} by \cite[Rem.~2.2]{BEW_amenable} and for \textbf{\ref{item_question_amenable_four}$\boldsymbol\Rightarrow$\ref{item_question_amenable_five}} by \cite[Rem.\ 4.14]{BEW_amenable}.

Let us show \textbf{\ref{item_question_amenable_four}$\boldsymbol\Rightarrow$\ref{item_question_amenable_one}}.
Firstly, note that there is a $C^*$-embedding $C(hG) \to \sHigCom G$ by fixing any rank-one projection $p$ on $H$ and mapping $f \in C(hG)$ to $f \otimes p \in \sHigCom G$. It extends to a normal\footnote{This means that for every bounded, increasing net $(x_i)$ of positive elements we have $\phi(\sup x_i) = \sup \phi(x_i)$ \cite[Defn.\ III.2.2.1]{blackadar_operator_algebras}.} $C^*$-embedding $\phi\colon C(hG)^{\ast\ast} \to (\sHigCom G)^{\ast\ast}$ \cite[Sec.\ III.5.2.10]{blackadar_operator_algebras}.
Secondly, there exists a conditional expectation from $\sHigCom G$ onto the image of $C(hG)$ therein arising from the vector state on $H$ corresponding to (the unit vector defined by) the rank-one projection $p$. It extends to a normal conditional expectation
\begin{equation}\label{eq_conditional_expectation_to_hG}
\psi\colon (\sHigCom G)^{\ast\ast} \to \phi(C(hG)^{\ast\ast}) \cong C(hG)^{\ast\ast}\,.
\end{equation}
Let us argue that $\psi$ is unital. As a general fact, the double dual $(\sHigCom G)^{\ast\ast}$ contains the multiplier algebra $\cM(\sHigCom G)$ as a subalgebra: It is the idealizer of $\sHigCom G$ in $(\sHigCom G)^{\ast\ast}$; and hence the unit of $\cM(\sHigCom G)$ is the one of $(\sHigCom G)^{\ast\ast}$. From this description of the unit we see that $\psi$ is unital.
From \ref{item_question_amenable_four} we get a net $(\theta_i\colon G \to Z((\sHigCom G)^\ast{}^\ast))_{i \in I}$ of positive type functions satisfying the properties in \cref{defn_amenabilities}.\ref{item_amenable}.
We compose with $\psi$ to get a net $(\psi\circ \theta_i\colon G \to C(hG)^{\ast\ast})_{i \in I}$\footnote{We use here that conditional expectations map central elements to central elements, and that we have $C(hG)^{\ast\ast} = Z(C(hG)^{\ast\ast})$ since $C(hG)^{\ast\ast}$ is commutative.}
showing amenability of $C(hG)$, where we used that $\psi$ is normal and hence continuous for the ultra-weak topologies \cite[Prop.\ III.2.2.2]{blackadar_operator_algebras}. Because $C(hG)$ is commutative, amenability of $C(hG)$ is equivalent to amenability of the $G$-action on the space $hG$ \cite[Rem.~2.2]{BEW_amenable}.

Let us show the implication \textbf{\ref{item_question_amenable_five}$\boldsymbol\Rightarrow$\ref{item_question_amenable_one}}.
We first note that $\sHigCom G$ is $G$-equivariantly $C^*$-isomorphic to its opposite $(\sHigCom G)^\op$ by applying a $C^*$-isomorphism $\cB(H) \to \cB(H)^\op$ point-wise to functions in $\sHigCom G$.
Next we employ the standard form representations for von Neumann algebras as developed by Haagerup \cite{haagerup_standard_forms} in the form presented in \cite[Thm.\ 5.1]{BEW_amenable}: There exists a normal, unital and faithful representation $\pi^\op\colon ((\sHigCom G)^\op)^{\ast\ast} \to \cB(V)$ on a Hilbert space $V$ and a unitary representation $u$ of $G$ on $V$ such that $(\pi^\op,u)$ is a covariant pair and we have $\pi^\op((\sHigCom G)^\op)^\prime \cong (\sHigCom G)^{\ast\ast}$. Composing $\pi^\op$ with an equivariant $C^*$-isomorphism $\sHigCom G \cong (\sHigCom G)^\op$ we get a covariant pair $(\rho,u)$ for $\sHigCom G$ with the property that $\rho(\sHigCom G)^\prime \cong (\sHigCom G)^{\ast\ast}$.
To this covariant pair we can now apply the assumed \ref{item_question_amenable_five} to get a net $(\theta_i\colon G \to \rho(\sHigCom G)')_{i \in I}$ of positive type functions having the properties listed in \cref{defn_amenabilities}.\ref{item_commutant_amenable}. Using the isomorphism $\rho(\sHigCom G)^\prime \cong (\sHigCom G)^{\ast\ast}$ and composing with the map $\psi$ from \eqref{eq_conditional_expectation_to_hG} we conclude that $C(hG)$ is amenable. This implies that $G$ acts amenably on $hG$ \cite[Rem.~2.2]{BEW_amenable}.

Finally, let us prove the equivalence \textbf{\ref{item_question_amenable_five}$\boldsymbol\Leftrightarrow$\ref{item_question_amenable_six}}. That \ref{item_question_amenable_five} implies the weak containment property $\sHigCom G \rtimes_{\max} G \cong \sHigCom G \rtimes_{\red} G$ is a general fact: It is the implication (i)$\boldsymbol\Rightarrow$(ii) in \cite[Thm.\ 4.17]{BEW_amenable} (this implication does not need the assumption in the cited theorem that $G$ is exact). Using the already proven implication \ref{item_question_amenable_five}$\boldsymbol\Rightarrow$\ref{item_question_amenable_one} and since $hG$ is a compact Hausdorff space, we conclude that \ref{item_question_amenable_five} also implies that $G$ is exact. The reverse implication \ref{item_question_amenable_six}$\boldsymbol\Rightarrow$\ref{item_question_amenable_five} is true in general \cite[Thm.\ 4.17]{BEW_amenable}.
\end{proof}

Let us now quickly turn to the corresponding statements for the (unreduced) stable Higson corona (thus resolving the unreduced variant of \cite[Conj.\ 1.25]{anastructinj} in the case that the metric space is the group itself):
\begin{prop}\label{prop_equiv_amenable_stable_corona}
Let $G$ be a countable and discrete group. Then the following are equivalent to each other:
\begin{enumerate}
\item\label{item_question_amenable_one_corona} The group $G$ acts amenably on its Higson corona $\partial_h G$.
\item\label{item_question_amenable_two_corona} $\sHigCor G$ is an amenable ($\partial_h G \rtimes G$)-\textCstar-algebra.
\item\label{item_question_amenable_three_corona} $\sHigCor G$ is a strongly amenable $G$-\textCstar-algebra.
\item\label{item_question_amenable_four_corona} $\sHigCor G$ is an amenable $G$-\textCstar-algebra.
\item\label{item_question_amenable_five_corona} $\sHigCor G$ is a commutant amenable $G$-\textCstar-algebra.
\item\label{item_question_amenable_six_corona} We have $\sHigCor G \rtimes_{\max} G \cong \sHigCor G \rtimes_{\red} G$ and $G$ is exact.
\end{enumerate}
\end{prop}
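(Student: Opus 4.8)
The plan is to mirror the proof of \cref{prop_equiv_amenable_stable_compactification} almost verbatim, using the two compatible short exact sequences
\[
0 \to C_0(G) \to C(hG) \to C(\partial_h G) \to 0
\quad\text{and}\quad
0 \to C_0(G,\cK(H)) \to \sHigCom G \to \sHigCor G \to 0
\]
as the organizing device. The purely $C^*$-algebraic implications \ref{item_question_amenable_two_corona}$\Rightarrow$\ref{item_question_amenable_three_corona}$\Rightarrow$\ref{item_question_amenable_four_corona}$\Rightarrow$\ref{item_question_amenable_five_corona} and the equivalence \ref{item_question_amenable_five_corona}$\Leftrightarrow$\ref{item_question_amenable_six_corona} are general facts about $G$-\textCstar-algebras and transfer word for word, citing \cite[Lem.\ 2.5, Rem.~2.2, Rem.\ 4.14, Thm.\ 4.17]{BEW_amenable}; none of them sees the specific algebra. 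Thus only the two endpoints \ref{item_question_amenable_one_corona}$\Leftrightarrow$\ref{item_question_amenable_two_corona} and the loop-closing implication genuinely require the geometry of the corona.

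For \ref{item_question_amenable_one_corona}$\Leftrightarrow$\ref{item_question_amenable_two_corona} the implication ``$\Leftarrow$'' is immediate, since acting amenably on $\partial_h G$ is by definition part of \ref{item_question_amenable_two_corona}. For ``$\Rightarrow$'' I must produce a $G$-equivariant, non-degenerate ${}^*$-homomorphism $\phi\colon C(\partial_h G) \to Z\mathcal{M}(\sHigCor G)$. I would obtain it by descending the compactification case: the quotient map $\sHigCom G \to \sHigCor G$ is non-degenerate and hence extends to a ${}^*$-homomorphism $\mathcal{M}(\sHigCom G) \to \mathcal{M}(\sHigCor G)$ carrying the center into the center. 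Restricting this extension to $Z\mathcal{M}(\sHigCom G) \cong C(hG)$ and checking that it kills exactly $C_0(G)$ (because $f \cdot a \in C_0(G,\cK(H))$ whenever $f \in C_0(G)$ and $a \in \sHigCom G$) yields the desired $\phi$ on $C(hG)/C_0(G) = C(\partial_h G)$; non-degeneracy follows from the constant function $1 \in C(hG)$.

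To close the loop I would prove \ref{item_question_amenable_six_corona}$\Rightarrow$\ref{item_question_amenable_one_corona} using a conditional-expectation retract, which is cleaner than the double-dual argument of \cref{prop_equiv_amenable_stable_compactification} and avoids computing $(\sHigCor G)^{\ast\ast}$. Fix a rank-one projection $p = |\xi\rangle\langle\xi|$ on $H$. The embedding $f \mapsto f \otimes p$ and the vector-state map $a \mapsto \langle \xi, a(\blank)\xi\rangle$ both send $C_0$-functions to $C_0$-functions, so they descend to a $G$-equivariant inclusion $\bar\phi\colon C(\partial_h G) \to \sHigCor G$ and a $G$-equivariant conditional expectation $\bar\psi\colon \sHigCor G \to C(\partial_h G)$ with $\bar\psi \circ \bar\phi = \id$. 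Since both an equivariant ${}^*$-homomorphism and an equivariant conditional expectation pass to the maximal and the reduced crossed products, naturality of the quotient map $\blank\rtimes_{\max} G \to \blank\rtimes_{\red} G$ shows that the isomorphism $\sHigCor G \rtimes_{\max} G \cong \sHigCor G \rtimes_{\red} G$ forces $C(\partial_h G)\rtimes_{\max} G \cong C(\partial_h G)\rtimes_{\red} G$: any element of the kernel of the scalar quotient map is pushed by $\bar\phi$ into the (trivial) kernel of the stable one and then pulled back by $\bar\psi$. With $G$ exact (also part of \ref{item_question_amenable_six_corona}), \cref{lem_amenably_containment} gives \ref{item_question_amenable_one_corona}.

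The main obstacle I anticipate is the bookkeeping around the descents in the last two paragraphs, namely verifying that the multiplier extension, the embedding $f \otimes p$, and the vector-state conditional expectation are genuinely compatible with the ideal $C_0(G,\cK(H))$ and remain $G$-equivariant after passing to the quotient; this is where the vanishing-variation structure and the precise form of the stable Higson corona enter. I would also need the standard but slightly technical fact that a $G$-equivariant conditional expectation onto an invariant subalgebra induces conditional expectations on both crossed products compatibly with the max-to-red quotient; once this is in place the diagram chase closing the loop is routine. If one instead prefers to follow \cref{prop_equiv_amenable_stable_compactification} literally and argue \ref{item_question_amenable_four_corona}$\Rightarrow$\ref{item_question_amenable_one_corona} and \ref{item_question_amenable_five_corona}$\Rightarrow$\ref{item_question_amenable_one_corona} via normal extensions to $(\sHigCor G)^{\ast\ast}$, the extra cost is identifying $(\sHigCor G)^{\ast\ast}$ with the complementary central summand $(1-z)(\sHigCom G)^{\ast\ast}$ and checking normality of $\bar\phi$ and $\bar\psi$ there, which is the only genuinely new analytic point.
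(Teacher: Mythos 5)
Your overall strategy---transporting the proof of \cref{prop_equiv_amenable_stable_compactification} through the two compatible extensions---is the same as the paper's, whose own proof of this proposition is literally ``analogous to the compactification case; tweak the multiplier algebra and check that $\psi$ descends''. Your treatment of \ref{item_question_amenable_one_corona}$\Leftrightarrow$\ref{item_question_amenable_two_corona} and your implication \ref{item_question_amenable_six_corona}$\Rightarrow$\ref{item_question_amenable_one_corona} via the equivariant retract $\bar\psi\circ\bar\phi=\id$ are correct, and the latter is arguably cleaner than the double-dual route. However, there is a genuine gap: condition \ref{item_question_amenable_five_corona} is never shown to imply any of the others. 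You assert that \ref{item_question_amenable_five_corona}$\Leftrightarrow$\ref{item_question_amenable_six_corona} is a general fact that ``does not see the specific algebra'', but only half of it is. Commutant amenability implies the weak containment property $\sHigCor G \rtimes_{\max} G \cong \sHigCor G \rtimes_{\red} G$ in general, and weak containment \emph{plus exactness of $G$} implies commutant amenability; but commutant amenability of a $G$-\textCstar-algebra does \emph{not} in general imply that $G$ is exact, and exactness is part of \ref{item_question_amenable_six_corona}. The paper's compactification proof is explicit about this point: it derives the exactness clause of \ref{item_question_amenable_six} from the \emph{separately proven} implication \ref{item_question_amenable_five}$\Rightarrow$\ref{item_question_amenable_one} (Haagerup's standard form applied to the opposite algebra, so that the commutant of the representation is the double dual, followed by the normal unital conditional expectation $\psi$ of \eqref{eq_conditional_expectation_to_hG}). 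With your architecture one obtains \ref{item_question_amenable_one_corona}$\Leftrightarrow$\ref{item_question_amenable_two_corona}$\Leftrightarrow$\ref{item_question_amenable_six_corona} and that each of these implies \ref{item_question_amenable_three_corona}, \ref{item_question_amenable_four_corona}, \ref{item_question_amenable_five_corona}; but \ref{item_question_amenable_three_corona}--\ref{item_question_amenable_five_corona} are left with no exit.

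To repair this you must carry out what you relegate to your final sentence as an optional alternative, namely the corona analogues of \ref{item_question_amenable_four}$\Rightarrow$\ref{item_question_amenable_one} and \ref{item_question_amenable_five}$\Rightarrow$\ref{item_question_amenable_one}: identify $(\sHigCor G)^{\ast\ast}$ with a central summand of $(\sHigCom G)^{\ast\ast}$ (or run the standard-form argument directly for $\sHigCor G$), verify that your vector-state expectation $\bar\psi$ extends to a normal \emph{unital} conditional expectation $(\sHigCor G)^{\ast\ast}\to C(\partial_h G)^{\ast\ast}$, and compose to land in $Z\bigl(C(\partial_h G)^{\ast\ast}\bigr)$. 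This is exactly the ``only genuinely new analytic point'' you identify at the end, but it is not optional: in both your argument and the paper's it is the only route from \ref{item_question_amenable_four_corona} or \ref{item_question_amenable_five_corona} back to \ref{item_question_amenable_one_corona}.
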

\begin{proof}
The proof is analogous to the one of \cref{prop_equiv_amenable_stable_compactification}. One only has to tweak the computation of the multiplier algebra, which now results in $\mathcal{M}(\sHigCor G)$ being the quotient of the $G$-\textCstar-algebra of all the bounded (continuous) functions of vanishing variation $G\to \cB(H)$ by its ideal $C_0(G,\cB(H))$, and one has to check that the map $\psi$ from \eqref{eq_conditional_expectation_to_hG} descends to the corresponding quotients.
\end{proof}

By \cref{lem_equiv_amenable_action_compactification_corona}
the group $G$ acts amenably on its Higson compactification $hG$ if and only if it acts amenably on its Higson corona $\partial_h G$. We therefore conclude the following corollary:
\begin{cor}
The conditions in \cref{prop_equiv_amenable_stable_compactification} are equivalent to the conditions in \cref{prop_equiv_amenable_stable_corona}.
\end{cor}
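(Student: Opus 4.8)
The plan is to reduce the statement to a single bridging equivalence, since all of the genuine work has already been carried out inside the two propositions. Both \cref{prop_equiv_amenable_stable_compactification} and \cref{prop_equiv_amenable_stable_corona} assert that all six of their respective conditions are mutually equivalent; in particular, each of the six conditions of \cref{prop_equiv_amenable_stable_compactification} is equivalent to its distinguished first condition, namely that $G$ acts amenably on its Higson compactification $hG$, and likewise each of the six conditions of \cref{prop_equiv_amenable_stable_corona} is equivalent to its distinguished first condition, that $G$ acts amenably on its Higson corona $\partial_h G$. Thus each proposition already organizes its conditions around a single hub, and it suffices to connect the two hubs.

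First I would therefore isolate these two distinguished conditions and observe that it is enough to link them to one another. Second, I would invoke \cref{lem_equiv_amenable_action_compactification_corona}, which states precisely that $G$ acts amenably on $hG$ if and only if it acts amenably on $\partial_h G$. Chaining the resulting three equivalences --- any condition of \cref{prop_equiv_amenable_stable_compactification} $\Leftrightarrow$ amenable action on $hG$ $\Leftrightarrow$ amenable action on $\partial_h G$ $\Leftrightarrow$ any condition of \cref{prop_equiv_amenable_stable_corona} --- then yields that every condition of the first proposition is equivalent to every condition of the second.

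I do not expect any genuine obstacle here: the corollary is a purely formal consequence of the two propositions together with the bridging \cref{lem_equiv_amenable_action_compactification_corona}. The only point meriting a moment's care is to confirm that \cref{lem_equiv_amenable_action_compactification_corona} connects exactly the two first conditions that serve as hubs for the internal equivalences of the propositions, which it does by construction. All the substantive content --- the operator-algebraic arguments involving the normal conditional expectation of \eqref{eq_conditional_expectation_to_hG}, the standard-form representations, and the weak containment property --- lives in the proofs of \cref{prop_equiv_amenable_stable_compactification}, \cref{prop_equiv_amenable_stable_corona}, and \cref{lem_equiv_amenable_action_compactification_corona}, so nothing new needs to be established at this stage and the proof reduces to this short transitivity argument.
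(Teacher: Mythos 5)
Your proposal is correct and is exactly the paper's argument: the paper derives the corollary by noting that \cref{lem_equiv_amenable_action_compactification_corona} identifies the first condition of \cref{prop_equiv_amenable_stable_compactification} with the first condition of \cref{prop_equiv_amenable_stable_corona}, and transitivity does the rest. Nothing further is needed.
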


\subsection{The case of the reduced algebras}
\label{sec_reduced_case}

\begin{defn}[{\cite[Def.~5.4]{EmeMey}}]
\label{defn_red_stable_Higson_comp_corona}
Let $G$ be any countable and discrete group, and equip it with any proper, left-invariant metric. Fix a separable Hilbert space $H$.

The \emph{reduced} stable Higson compactification $\sHigComRed G$ is the $C^*$-algebra of all bounded (continuous) functions of vanishing variation $f\colon G \to \cB(H)$ with $f(x)-f(y) \in \cK(H)$ for all $x,y\in G$.

The \emph{reduced} stable Higson corona $\sHigCorRed G$ is defined as the quotient
\[
\sHigCor G \coloneqq \sHigComRed G / C_0(G,\cK(H))\,.\qedhere
\]
\end{defn}

Contrary to what was claimed in \cite[Prop. 5.8]{anastructinj} the analogue of \cref{prop_equiv_amenable_stable_compactification} for the reduced stable Higson compactification $\sHigComRed G$ is in general not true (and similarly for Proposition 5.12 in loc.\,cit.), as we will discuss below. This also answers in the negative Question 5.10 and Conjecture 1.25 in loc.\,cit.

\begin{lem}\label{lem_counterex}
Assume that $\sHigComRed G$ is an amenable $G$-\textCstar-algebra. Then $G$ is amenable.
\end{lem}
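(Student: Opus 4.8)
The plan is to extract from the amenability net for $\sHigComRed G$ a net of scalar-valued, finitely supported functions of positive type on $G$ converging pointwise to $1$; such a net is precisely amenability of the trivial $G$-\textCstar-algebra $\IC$ in the sense of \cref{defn_amenabilities}.\ref{item_amenable}, and this is equivalent to amenability of $G$. The whole mechanism for passing from operator-valued to scalar-valued data is a single $G$-\emph{invariant} state, and the point of the reduced setting is that it carries a canonical one.

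The structural observation I would start from is that the defining condition $f(x)-f(y)\in\cK(H)$ makes the class $[f(x)]$ in the Calkin algebra $\cQ(H)\coloneqq\cB(H)/\cK(H)$ independent of $x\in G$, so that $f\mapsto[f(x)]$ is a well-defined unital $*$-homomorphism $q\colon\sHigComRed G\to\cQ(H)$ (unital because the constant function $1$ is the unit of $\sHigComRed G$). Since $(g.f)(x)=f(g^{-1}x)$ and $f(g^{-1}x)-f(x)\in\cK(H)$, we get $q(g.f)=q(f)$, i.e.\ $q$ is $G$-equivariant for the \emph{trivial} action on $\cQ(H)$. Hence, for any state $\omega$ on $\cQ(H)$, the state $\sigma_0\coloneqq\omega\circ q$ on $\sHigComRed G$ is $G$-invariant. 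I would remark that in the unreduced case this construction degenerates ($f$ takes values in $\cK(H)$, so $q=0$), which is exactly consistent with the unreduced analogue failing to force amenability.

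Next I would extend $\sigma_0$ to its canonical normal state $\sigma$ on $(\sHigComRed G)^{\ast\ast}$; being the normal extension of a $G$-invariant state, $\sigma$ is invariant for the bidual action. Feeding in the net $\bigl(\theta_i\colon G\to Z((\sHigComRed G)^{\ast\ast})\bigr)_{i\in I}$ supplied by amenability, I set $\varphi_i\coloneqq\sigma\circ\theta_i\colon G\to\IC$. These are finitely supported, satisfy $\varphi_i(e)=\sigma(\theta_i(e))\le\sigma(1)=1$, and satisfy $\varphi_i(g)=\sigma(\theta_i(g))\to\sigma(1)=1$ because $\sigma$ is normal, hence ultra-weakly continuous. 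A net of finitely supported functions of positive type converging to $1$ pointwise witnesses amenability of $G$, so this finishes the argument once positive type of the $\varphi_i$ is checked.

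The step I expect to require the most care is precisely that each $\varphi_i$ is of positive type in the scalar sense. By definition $\theta_i$ of positive type means the matrices $\bigl(g_k.\theta_i(g_k^{-1}g_l)\bigr)_{k,l}$ are positive; applying the completely positive map $\mathrm{id}_{M_n}\otimes\sigma$ preserves positivity, and here $G$-invariance of $\sigma$ does the essential work: it allows me to discard the twist $g_k.(\blank)$, so the entries become $\sigma(\theta_i(g_k^{-1}g_l))=\varphi_i(g_k^{-1}g_l)$ and the matrix $\bigl(\varphi_i(g_k^{-1}g_l)\bigr)_{k,l}$ is positive. Thus the genuine obstacle is not this computation but the prior production of a $G$-invariant (normal) state, which the Calkin-algebra quotient $q$ furnishes for free in the reduced setting and which is unavailable in the unreduced one.
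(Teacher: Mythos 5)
Your proof is correct and rests on the same structural observation as the paper's: the quotient of $\sHigComRed G$ by its ideal $\sHigCom G$ is the Calkin algebra with the trivial $G$-action. The paper then simply cites that amenability descends to quotients and that a trivial action can only be amenable for an amenable group, whereas you unpack both facts into an explicit argument via the $G$-invariant state $\omega\circ q$ and its normal extension; the mathematical content is the same.
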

\begin{proof}
Note that $\sHigCom G$ is an ideal in $\sHigComRed G$ with quotient the Calkin algebra $\cB(H) / \cK(H)$ equipped with the trivial $G$-action. Since amenability descends to quotients, the claim follows since the trivial action can only be amenable for amenable groups.
\end{proof}

The analogue of \cref{lem_counterex} for the reduced stable Higson corona is also true by the same argument.

\begin{ex}
Consider a Gromov hyperbolic group $G$. It is known that $G$ acts amenably on its Gromov boundary \cite[Ex.~2.7.4]{anantharaman} and hence, because the Gromov boundary is Higson dominated, $G$ acts also amenably on its Higson corona.

Because hyperbolic groups are in general not amenable, this provides concrete counter-examples to \cite[Prop. 5.8 \& 5.12]{anastructinj} and invalidates Example 5.13 and Proposition 1.24 in loc.\,cit.
\end{ex}

\begin{rem}
The mistake in \cite{anastructinj} occurs in Lemma 5.6 therein: The map constructed there is in general not unital, contrary to what is claimed there. But the unitality of this map was crucial for the proof of Proposition 5.8 therein.
\end{rem}

\subsection{Nuclearity of crossed products and positive type kernels}
\label{sec_nuclear}

Let us first recall the necessary notions related to kernels and Schur multipliers:

\begin{defn}[{\cite[Sec.~11.2]{roe_lectures_coarse_geometry}}]
A symmetric function $k\colon G \times G \to \IR$ is called a \emph{positive type kernel}, if for every $n \in \IN$ and every $g_1, \ldots, g_n \in G$ the matrix given by $[k(g_i,g_j)]_{i,j} \in \Mat_{n \times n}(\IR)$ is positive semidefinite.

\begin{itemize}
\item A positive type kernel $k$ is called \emph{normalized} if $k(g,g) = 1$ for every $g \in G$.

Note that in this case the kernel will be uniformly bounded; concretely, we will have $|k(g,h)| \le 1$ for all $g,h \in G$ \cite[Thm.~D.3]{BrownOzawaCstarFiniteDim}.
\item A positive type kernel $k$ is called \emph{equivariant} if $k(h_1 g, h_2 g) = k(h_1,h_2)$ for every $g,h_1,h_2 \in G$.\qedhere
\end{itemize}
\end{defn}

If $k$ is a positive type kernel on $G \times G$ with $k(g,g) \le 1$ for every $g \in G$, then the Schur multiplier
\begin{equation}\label{eq_defn_Schur_multipliers}
\theta_k\colon \cB(\ell^2(G)) \to \cB(\ell^2(G))\,, \quad [T_{g,h}]_{g,h \in G} \mapsto [k(g,h) T_{g,h}]_{g,h \in G}
\end{equation}
is a completely positive contraction; and if $k$ is additionally normalized, then the corresponding Schur multiplier $\theta_k$ is unital and completely positive (\cite[Lem.~11.17]{roe_lectures_coarse_geometry}, \cite[Thm.~D.3]{BrownOzawaCstarFiniteDim}).

We will write $C_c(G\times G, \Delta)$ for the algebra of all functions $f$ on $G \times G$ for which there is an $R > 0$ such that $f(g,h) = 0$ whenever $d(g,h) > R$; and we call a subset $E$ of $G \times G$ a \emph{finite width neighbourhood of the diagonal $\Delta$} if there is an $R > 0$ such that $d(x,y) < R$ for all $(x,y) \in E$.

The following important properties that the group $G$ might have were originally defined in different terms, but can be equivalently defined by the existence of positive type functions with certain properties:
\begin{fact}\label{fact_amenable_exact_kernels}\mbox{}
\begin{enumerate}
\item\label{fact_amenable_kernel} The group $G$ is \emph{amenable}, if there is a sequence $(k_n)_{n \in \IN}$ in $C_c(G \times G,\Delta)$ of normalized, equivariant positive type kernels converging to $1$ uniformly on all finite width neighbourhoods of the diagonal $\Delta$ in $G \times G$.

References are \cite[Thm.\ 2.6.8]{BrownOzawaCstarFiniteDim}, \cite[Prop.\ 3.5]{anantharaman} and \cite[Subsec.~4.3]{anatharaman_amen_exact_operatoralgebras}.

\item\label{fact_exact_kernel} The group $G$ is \emph{exact}, if there exists a sequence $(k_n)_{n \in \IN}$ in $C_c(G \times G,\Delta)$ of normalized positive type kernels converging to $1$ uniformly on all finite width neighbourhoods of the diagonal $\Delta$ in $G \times G$.

References are \cite[Thm.\ 3]{OZAWA2000691}, \cite[Lem.\ 11.37]{roe_lectures_coarse_geometry}, \cite[Prop.~3.2]{tu} and \cite[Ex.\ 2.7(1)]{anantharaman}.\qedhere
\end{enumerate}
\end{fact}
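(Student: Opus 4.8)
The plan is to reduce both statements to the standard dictionary between positive type kernels and Gram matrices of vectors in a Hilbert space, the \emph{Kolmogorov decomposition}: a symmetric function $k$ is of positive type exactly when there is a Hilbert space $V$ and a map $\xi\colon G \to V$ with $k(g,h) = \langle \xi_g, \xi_h \rangle$. Under this dictionary, normalization $k(g,g)=1$ says the $\xi_g$ are unit vectors, while finite width support corresponds to $\langle \xi_g, \xi_h \rangle = 0$ once $d(g,h)$ exceeds a fixed threshold. Since the chosen metric is proper, every ball in $G$ is finite; this is the soft observation that lets one move between "finite support" and "finite width", and between pointwise and uniform convergence in the equivariant case below.

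For the amenability statement I would first note that an \emph{equivariant} kernel depends only on the relative position of its arguments, so $k(g,h) = \varphi(g^{-1}h)$ for a single function $\varphi\colon G \to \IR$; then $k$ is a normalized, finitely supported positive type kernel precisely when $\varphi$ is a finitely supported positive definite function with $\varphi(e)=1$, and, because balls are finite, uniform convergence $k_n \to 1$ on each finite width neighbourhood of $\Delta$ is equivalent to pointwise convergence $\varphi_n \to 1$ on $G$. Thus the statement reduces to the classical characterization of amenability by such functions. For the forward direction I would take a F\o lner sequence $(F_n)$ and set $\varphi_n(s) = |F_n \cap sF_n|/|F_n|$, i.e. the matrix coefficients $\langle \lambda(s)\xi_n, \xi_n\rangle$ of the normalized indicator vectors $\xi_n = |F_n|^{-1/2}\mathbf 1_{F_n}$ in the left regular representation; these are manifestly positive definite, normalized, supported in $F_nF_n^{-1}$, and tend to $1$ pointwise by the F\o lner property. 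Conversely, the existence of such $\varphi_n$ exhibits weak containment of the trivial representation in the left regular representation, which by Hulanicki's theorem forces amenability.

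For the exactness statement I would invoke the theorem of Ozawa (building on Higson--Roe and Guentner--Kaminker) that, for a countable discrete group with a proper left-invariant metric, exactness is equivalent to Property A. Property A may be phrased as: for every $R,\varepsilon>0$ there is a map $\xi\colon G \to \ell^2(G)$ into the unit sphere with $\supp \xi_g \subseteq B(g,S)$ for some $S=S(R,\varepsilon)$ and $\|\xi_g - \xi_h\| < \varepsilon$ whenever $d(g,h)\le R$. Given such data one forms the Gram kernels $k(g,h) = \langle \xi_g, \xi_h\rangle$; these are of positive type, normalized, of finite width $2S$ (the supports of $\xi_g,\xi_h$ are disjoint once $d(g,h)>2S$), and satisfy $k(g,h) = 1 - \tfrac12\|\xi_g-\xi_h\|^2 \to 1$ uniformly on $\{d \le R\}$. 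Letting $R\to\infty$ and $\varepsilon\to 0$ produces the desired sequence $(k_n)$.

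The converse of the exactness statement is where the real work lies, and I expect it to be the main obstacle. Starting from a sequence of finite-width normalized positive type kernels $k_n$, the Kolmogorov decomposition yields unit vectors $\xi_g$ with the correct inner products, but these live in an abstract Hilbert space and need not be finitely, or even boundedly, supported inside any $\ell^2(G)$. The technical heart of the argument is to upgrade them to genuine Property A data: one uses the finite width of $k_n$ to show each $\xi_g$ is approximately concentrated near $g$, truncates to an honest finite support around $g$, and renormalizes, checking that both the normalization and the small-variation estimate survive with only controlled loss. Feeding the resulting maps into the Property A $\Rightarrow$ exactness implication then closes the equivalence.
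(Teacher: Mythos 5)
The paper does not prove this statement at all: it is recorded as a \emph{Fact} and delegated entirely to the cited literature (Brown--Ozawa Thm.~2.6.8 for amenability; Ozawa, Roe Lem.~11.37 and Tu for exactness). Your outline reproduces exactly the standard arguments behind those citations, so in that sense it is the ``same approach''. Two remarks. First, a convention point: the paper's notion of equivariance is $k(h_1g,h_2g)=k(h_1,h_2)$, i.e.\ right-translation invariance, so the associated function is $\varphi(s)=k(s,e)$ with $k(g,h)=\varphi(gh^{-1})$, whereas you write $k(g,h)=\varphi(g^{-1}h)$; with the left-invariant metric the finite-width condition matches finite support of $\varphi$ only after reconciling these conventions (e.g.\ by composing with inversion), a harmless but worth-noting adjustment.

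Second, and more substantively: in the hard direction of the exactness statement your mechanism is off. You propose to take the abstract Kolmogorov vectors $\xi_g\in V$ and show they are ``approximately concentrated near $g$'', then truncate. But $V$ is an abstract Hilbert space with no spatial relation to $G$, so ``concentrated near $g$'' has no meaning there and no truncation of the $\xi_g$ is available. The standard fix (this is what Roe's Lem.~11.37 and Tu's Prop.~3.2 actually do) is to discard the Kolmogorov vectors and instead use the concrete columns of the kernel itself: set $\eta_g\coloneqq k(g,\cdot)\in\ell^2(G)$. The finite-width hypothesis together with properness of the metric makes $\eta_g$ automatically finitely supported in a ball $B(g,S)$ of uniformly bounded cardinality (bounded geometry), $\|\eta_g\|\ge k(g,g)=1$, and the closeness of $k$ to $1$ near the diagonal yields, after normalizing, the small-variation estimate $\|\eta_g-\eta_h\|<\varepsilon$ for $d(g,h)\le R$ by a Cauchy--Schwarz/positivity argument; this is genuine Property~A data, and Property~A $\Rightarrow$ exactness closes the loop. (Alternatively, Ozawa's Thm.~3 bypasses Property~A and goes from the kernels directly to nuclearity of $\ell^\infty(G)\rtimes_\red G$ via Schur multipliers, which is the route the paper itself imitates in the proof of its Proposition on nuclearity and kernels.) With that substitution your argument is complete and standard; as written, the localization step would not go through.
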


Let us introduce in the following a condition on kernels which is related to the Higson compactification:
\begin{defn}
Let $k$ be a function on $G \times G$. We will say that $k$ has \emph{vanishing variation on diagonals} if for every $g \in G$ the function $h \mapsto k(h,gh)$ on $G$ has vanishing variation.
\end{defn}

Obviously, if $k$ is equivariant, then it has vanishing variation on diagonals. This means that in the following \cref{prop_equiv_amenable_nuclear_kernels} the \cref{item_question_kernel_four} sits naturally between the two conditions in \cref{fact_amenable_exact_kernels}:

\begin{prop}\label{prop_equiv_amenable_nuclear_kernels}
Let $G$ be a countable and discrete group. Then the following are equivalent to each other:
\begin{enumerate}
\item\label{item_question_kernel_one} The group $G$ acts amenably on its Higson compactification.
\item\label{item_question_kernel_two} The $C^*$-algebra $C_h(G) \rtimes_\red G$ is nuclear.
\item\label{item_question_kernel_three} The embedding $\IC \rtimes_\red G \to C_h(G) \rtimes_\red G$ is nuclear.
\item\label{item_question_kernel_four} There is a sequence $(k_n)_{n \in \IN}$ in $C_c(G \times G, \Delta)$ of normalized positive type kernels having vanishing variation on diagonals and converging to $1$ uniformly on all finite width neighbourhoods of the diagonal $\Delta$ in $G \times G$.
\end{enumerate}
\end{prop}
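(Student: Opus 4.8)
The plan is to prove the cycle \textbf{\ref{item_question_kernel_one}$\Leftrightarrow$\ref{item_question_kernel_two}} together with \textbf{\ref{item_question_kernel_two}$\Rightarrow$\ref{item_question_kernel_three}$\Rightarrow$\ref{item_question_kernel_four}$\Rightarrow$\ref{item_question_kernel_two}}. Throughout I abbreviate $A \coloneqq C_h(G)\rtimes_\red G$, I use $C_h(G)=C(hG)$, and I represent $A$ faithfully on $\ell^2(G)$ by letting $C_h(G)\subseteq\ell^\infty(G)$ act as diagonal multiplication operators and $G$ act through the left regular representation $\lambda$. The equivalence \textbf{\ref{item_question_kernel_one}$\Leftrightarrow$\ref{item_question_kernel_two}} is the theorem of Anantharaman-Delaroche that, for a discrete group acting on a compact Hausdorff space $X$, the action is amenable if and only if $C(X)\rtimes_\red G$ is nuclear \cite{anantharaman}, \cite[Ch.~4]{BrownOzawaCstarFiniteDim}; I apply it with $X=hG$. (Nuclearity of $A$ forces $G$ to be exact, since $C^*_\red(G)=\IC\rtimes_\red G$ is a $C^*$-subalgebra of $A$ and exactness passes to subalgebras, consistent with \cref{lem_amenably_containment}.) It remains to tie nuclearity of $A$ to the kernels of \ref{item_question_kernel_four}, which is the new content.

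The implication \textbf{\ref{item_question_kernel_two}$\Rightarrow$\ref{item_question_kernel_three}} is immediate: a $*$-homomorphism into a nuclear $C^*$-algebra is a nuclear map, since $\iota=\id_A\circ\iota$. For \textbf{\ref{item_question_kernel_three}$\Rightarrow$\ref{item_question_kernel_four}} I extract kernels from the nuclear embedding by taking matrix coefficients. Nuclearity of $\iota$ yields ucp maps $\phi_n\colon C^*_\red(G)\to M_{m_n}$ and $\psi_n\colon M_{m_n}\to A$ with $T_n\coloneqq\psi_n\circ\phi_n\to\iota$ pointwise; I arrange $\psi_n$ to be unital and to have finite-propagation range by a standard perturbation, using that the finite-propagation elements (finitely many nonzero coefficients $f_s\in C_h(G)$) are dense in $A$. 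Define $k_n(x,y)\coloneqq\langle\delta_x,T_n(\lambda_{xy^{-1}})\delta_y\rangle$. Since $[\lambda_{x_ix_j^{-1}}]_{i,j}=vv^*$ for the column $v=(\lambda_{x_i})_i$ is positive in $M_n(C^*_\red(G))$, complete positivity of $T_n$ gives $[k_n(x_i,x_j)]_{i,j}\ge0$; passing to real parts turns $k_n$ into a real symmetric normalized positive type kernel. The finite-propagation range of $\psi_n$ makes $k_n$ of finite width; the norm convergence $T_n(\lambda_g)\to\lambda_g$ for each $g$ in the finite ball $\{g:d(e,g)\le R\}$ gives $k_n\to1$ uniformly on width-$R$ neighbourhoods; and, writing $T_n(\lambda_{g^{-1}})=\sum_s f_s\lambda_s\in A$, a direct computation gives $k_n(h,gh)=f_{g^{-1}}(h)$, so $h\mapsto k_n(h,gh)$ lies in $C_h(G)$ and therefore has vanishing variation. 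This is exactly the vanishing variation on diagonals demanded in \ref{item_question_kernel_four}.

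For \textbf{\ref{item_question_kernel_four}$\Rightarrow$\ref{item_question_kernel_two}} I run the same dictionary backwards through Schur multipliers. Each $k_n$ gives a ucp Schur multiplier $\theta_{k_n}$ on $\cB(\ell^2 G)$ as in \eqref{eq_defn_Schur_multipliers}. Because the $(x,y)$-entry of $\theta_{k_n}(a)$ is $k_n(x,y)$ times that of $a$, the coefficient of $\lambda_s$ in $\theta_{k_n}(a)$ is $x\mapsto k_n(x,s^{-1}x)f_s(x)$; the hypothesis that $x\mapsto k_n(x,s^{-1}x)$ has vanishing variation is precisely what guarantees $\theta_{k_n}(A)\subseteq A$. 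Moreover $\|\theta_{k_n}(a)-a\|\to0$ for finite-propagation $a$ (the entries of the difference are supported on a fixed band and are uniformly small), so $\theta_{k_n}\to\id_A$ pointwise. Finally, since $k_n$ is a normalized positive type kernel of finite width and all balls satisfy $|B(x,R_n)|=|B(e,R_n)|\eqqcolon N_n<\infty$ (properness and left-invariance of the metric), the standard property~A factorization \cite[Ch.~11]{roe_lectures_coarse_geometry}, \cite[Ch.~4--5]{BrownOzawaCstarFiniteDim} writes $\theta_{k_n}|_A$ as a composition of ucp maps through the nuclear algebra $\ell^\infty(G)\otimes M_{N_n}$. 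Hence $\id_A$ is a pointwise limit of ucp maps factoring through nuclear algebras, so $A$ is nuclear.

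The heart of the argument, and the main thing to get right, is the two dictionary directions and, within them, the single observation that vanishing variation on diagonals is exactly the condition for a finite-propagation Schur multiplier to preserve $A=C_h(G)\rtimes_\red G$ (used in \textbf{\ref{item_question_kernel_four}$\Rightarrow$\ref{item_question_kernel_two}}) and, dually, the condition satisfied by the matrix coefficients of elements of $A$ (used in \textbf{\ref{item_question_kernel_three}$\Rightarrow$\ref{item_question_kernel_four}}); this is what slots \ref{item_question_kernel_four} between the equivariant kernels of amenability and the unconstrained kernels of exactness in \cref{fact_amenable_exact_kernels}. The genuinely routine points, to be handled along the templates in \cite{BrownOzawaCstarFiniteDim}, are: upgrading cp contractions to unital maps (all algebras and $\iota$ are unital); the finite-propagation perturbation of $\psi_n$; replacing the Hermitian kernel by its real part; and the realization of a finite-width normalized positive type kernel by uniformly finitely supported unit vectors underlying the property~A factorization.
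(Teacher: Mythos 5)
Your handling of \ref{item_question_kernel_one}$\Leftrightarrow$\ref{item_question_kernel_two}, \ref{item_question_kernel_two}$\Rightarrow$\ref{item_question_kernel_three} and \ref{item_question_kernel_three}$\Rightarrow$\ref{item_question_kernel_four} essentially coincides with the paper's: the same citation of Anantharaman-Delaroche for the first equivalence, and the same Ozawa-style extraction of matrix coefficients (after perturbing to finite propagation) for the third implication; your computation $k_n(h,gh)=f_{g^{-1}}(h)$ is exactly the point that makes ``vanishing variation on diagonals'' appear. Where you genuinely diverge is in closing the cycle: the paper obtains \ref{item_question_kernel_one}$\Leftrightarrow$\ref{item_question_kernel_four} by citing the kernel characterization of amenable actions on the Higson compactification \cite[Prop.~2.5]{anantharaman}, and only \emph{remarks} that \ref{item_question_kernel_four}$\Rightarrow$\ref{item_question_kernel_two} could be done by modifying \cite[Thm.~3]{OZAWA2000691}. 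You make that remark the load-bearing step, and as written it has a gap.

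The gap is this. You factor $\theta_{k_n}|_A$ ($A=C_h(G)\rtimes_\red G$) as $A\to\ell^\infty(G)\otimes M_{N_n}\to\cB(\ell^2(G))$ with only the \emph{composition} landing in $A$. A pointwise approximation of $\id_A$ by ucp maps whose second legs take values in $\cB(\ell^2(G))$ rather than in $A$ establishes nuclearity of the inclusion $A\hookrightarrow\cB(\ell^2(G))$, i.e.\ exactness of $A$ --- not nuclearity of $A$; this is precisely the nuclear/exact distinction. For the uniform Roe algebra $\ell^\infty(G)\rtimes_\red G$ the issue is invisible because the second leg of the standard property~A factorization tautologically lands back in it; for the subalgebra $A$ it does not, since the second leg applied to a general $f\otimes e_{g,h}$ with $f\in\ell^\infty(G)$ produces finite-propagation operators whose coefficient functions need not lie in $C_h(G)$. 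To repair it one must take $C_h(G)\otimes M_{N_n}$ as the intermediate algebra and check both legs: the compression leg $a\mapsto\bigl(x\mapsto[a_{gx,hx}]_{g,h}\bigr)$ does land in $C_h(G)\otimes M_{N_n}$ (diagonal matrix coefficients of elements of $A$ have vanishing variation, and left translations are isometries), but the reconstruction leg requires realizing $k_n$ by locally supported vectors whose dependence on the base point itself has vanishing variation --- a ``vanishing-variation square root'' of the kernel, e.g.\ via the local square roots $[k_n(gx,hx)]_{g,h}^{1/2}$ and the fact that vanishing variation is preserved under uniformly continuous functional calculus. In other words, the hypothesis of vanishing variation on diagonals must be used a second time \emph{inside} the factorization, not only to see $\theta_{k_n}(A)\subseteq A$ as you do; the property~A template you cite does not supply this. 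The cheapest fix is the paper's route: prove \ref{item_question_kernel_four}$\Rightarrow$\ref{item_question_kernel_one} via \cite[Prop.~2.5]{anantharaman} and feed it back into the equivalence \ref{item_question_kernel_one}$\Leftrightarrow$\ref{item_question_kernel_two} you already have.
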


\begin{proof}
The equivalence of \ref{item_question_kernel_one} and \ref{item_question_kernel_two} follows from \cite[Thm.~5.8]{anantharaman}. Note that since we assume $G$ to be discrete, the Property (W) in the statement of \cite[Thm.~5.8]{anantharaman} is automatically satisfied by \cite[Ex.~4.4]{anantharaman}.\footnote{The implication \ref{item_question_kernel_one} $\Rightarrow$ \ref{item_question_kernel_two} is also shown in \cite[Theorem on last page]{matsumura} using Lemma \ref{lem_amenably_containment}\ref{item_amenably_compactification}.}

The equivalence of \ref{item_question_kernel_one} with \ref{item_question_kernel_four} follows from \cite[Prop.\ 2.5]{anantharaman} in combination with the reformulation of it discussed directly before Prop.\ 3.5 in loc.~cit.

That \ref{item_question_kernel_two} implies \ref{item_question_kernel_three} is clear: Nuclearity of $C_h(G) \rtimes_\red G$ means that its identity map is nuclear, whence the composition $\IC \rtimes_\red G \to C_h(G) \rtimes_\red G \xrightarrow{\id} C_h(G) \rtimes_\red G$ will be also nuclear \cite[Exer.~2.1.4]{BrownOzawaCstarFiniteDim}.

For the proof that \ref{item_question_kernel_three} implies \ref{item_question_kernel_four} we follow the corresponding proof for exactness in \cite[Lem.~2 \& Thm.~3]{OZAWA2000691}.
By the nuclearity assumption,\footnote{See \cite[Exer.~2.1.1]{BrownOzawaCstarFiniteDim} for this version of nuclearity of maps.} for any finite subset $E \subset G$ (with $e \in E$), regarded as a subset $E \subset \IC \rtimes_\red G$, and $\varepsilon > 0$ there is an $n \in \IN$ and unital completely positive maps $\phi\colon \IC \rtimes_\red G \to \Mat_n(\IC)$ and $\psi\colon \Mat_n(\IC) \to C_h(G) \rtimes_\red G$ such that
\[
\|(\psi \circ \phi)(x) - x\| < \varepsilon / 2 \text{ for all } x \in E\,.
\]
The map $\phi$ can be extended to a unital completely positive map $\cB(\ell^2(G)) \to \Mat_n(\IC)$ along the inclusion $\IC \rtimes_\red G = C^*_\red(G) \subset \cB(\ell^2(G))$ \cite[Cor.\ 1.5.16]{BrownOzawaCstarFiniteDim}; let us continue to denote the extended map by $\phi$. We can now do the approximation argument from the proof of \cite[Lem.~2]{OZAWA2000691} with $\phi$ to finally obtain the unital completely positive map $\phi^{\prime\prime}\colon \IC \rtimes_\red G \to \Mat_n(\IC)$ satisfying
\[
\|\phi^{\prime\prime}(x) - \phi(x)\| < \varepsilon / 2 \text{ for all } x \in E
\]
and the following property: Putting $\theta\coloneqq \psi \circ \phi^{\prime\prime}$ we get a map $\theta$ which is
\begin{enumerate}
\item unital completely positive and of finite rank,
\item satisfies $\|\theta(x) - x\| < \varepsilon$ for all $x \in E$, and
\item has the form
\[
\theta(x) = \sum_{k=1}^d \omega_{\delta_{p(k)},\delta_{q(k)}}(x) \otimes y_k
\]
for elements $y_k \in C_h(G) \rtimes_\red G$ and the linear functionals $\omega_{\delta_{p(k)},\delta_{q(k)}}$ on $\cB(\ell^2(G))$ given by $\omega_{\delta_{p(k)},\delta_{q(k)}}(x) = \langle x(\delta_{p(k)}),\delta_{q(k)}\rangle$ for $p(k),q(k) \in G$.
\end{enumerate}
Setting $k\colon G \times G \to \IC$ as $k(s,t) \coloneqq \langle \delta_s, \theta(s t^{-1}) \delta_t\rangle$, where we have secretely used the canonical inclusion $C_h(G) \rtimes_\red G \subset \cB(\ell^2(G))$, we get a normalized positive type kernel which is
\begin{itemize}
\item supported on the finite width neighbourhood defined by
\[
F \coloneqq \{q(k)p(k)^{-1}\colon k=1,\ldots,d\}
\]
of the diagonal $\Delta \subset G \times G$,\footnote{This is the subset of all $(s,t) \in G \times G$ satisfying $st^{-1} \in F$.}
\item has vanishing variation on diagonals, and
\item which is $\varepsilon$-close to $1$ on the finite width neighbourhood defined by $E$.
\end{itemize}
This finishes the proof that \ref{item_question_kernel_three} implies \ref{item_question_kernel_four}.

The proof of the proposition is now complete. As a remark, let us note that the implication from \ref{item_question_kernel_four} to \ref{item_question_kernel_two} can be directly proven by doing the obvious modifications to the proof of \cite[Thm.~3(ii)$\Rightarrow$(iii)]{OZAWA2000691}
\end{proof}

\begin{rem}\label{rem_between_amenable_exact}
\cref{item_question_kernel_one} in \cref{prop_equiv_amenable_nuclear_kernels} sits naturally between amenability and exactness, and the same is true for \cref{item_question_kernel_four} by \cref{fact_amenable_exact_kernels}.

It is known that nuclearity of the reduced group $C^*$-algebra $C^*_\red(G) = \IC \rtimes_\red G$ is equivalent to amenability \cite{lance_amenability}. Moreover, nuclearity of the uniform Roe algebra (which is isomorphic to $\ell^\infty(G) \rtimes_\red G$) is equivalent to the exactness of $G$ \cite{OZAWA2000691}. Hence \cref{item_question_kernel_two} in \cref{prop_equiv_amenable_nuclear_kernels} sits naturally between amenability and exactness.

In \cite[Rem.~2 in Sec.~5]{Guentner_Kaminker} it was suggested to consider $C^*$-algebras $A$ satisfying $C^*_\red(G) \subset A \subset \ell^\infty(G) \rtimes_\red G$ and impose the requirement that the inclusion of $C^*_\red(G)$ into $A$ be a nuclear map in order to get conditions interpolating between amenability and exactness. The \cref{prop_equiv_amenable_nuclear_kernels} and the further results of this article show that $A = C_h(G) \rtimes_\red G$ is a good choice.
\end{rem}

\section{Isomorphism results}
\label{sec_isomorphism_results}

Let $G$ be a countable, discrete group and assume that it admits a $G$-finite classifying space for proper $G$-actions $\EG$. Then we have a short exact sequence
\begin{equation}\label{eq_SES_sHigComCorRed}
0 \to C_0(\EG)\otimes\cK \to \sHigComRed \EG \to \sHigCorRed G \to 0
\end{equation}
of $G$-$C^*$-algebras.
\begin{prop}\label{prop_LHS_BC_sHigComRed}
Let $G$ be a countable discrete group and assume that it admits a $G$-finite classifying space for proper $G$-actions $\EG$.

If $G$ is exact, then the boundary morphism $\partial\colon K_*^\toprm(G;\sHigCorRed G) \to K_{*-1}^\toprm(G;C_0(\EG))$ resulting from \eqref{eq_SES_sHigComCorRed} is an isomorphism and consequently
\begin{equation}\label{eq_Ktop_vanishes_sHigComRed}
K_{*-1}^\toprm(G;\sHigComRed \EG) \cong 0
\end{equation}
for all $* \in \IZ$.
\end{prop}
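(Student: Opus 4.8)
The two assertions are linked by the long exact sequence, so my plan is to first carry out a formal reduction and then concentrate all the work on the vanishing \eqref{eq_Ktop_vanishes_sHigComRed}. The functor $A \mapsto K_*^\toprm(G;A)$ is the domain of the Baum--Connes assembly map; by $G$-finiteness of $\EG$ it is computed as $\KK^G_*(C_0(\EG),A)$ (the colimit over $G$-compact subsets stabilises at $\EG$), and as such it is a homotopy-invariant, $C^*$-stable homology theory in the coefficient $A$, turning a short exact sequence into a cyclic six-term exact sequence; this step is formal and uses nothing about exactness of $G$. Applying it to \eqref{eq_SES_sHigComCorRed} yields such a sequence whose connecting homomorphism is exactly $\partial$. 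Using $C^*$-stability to identify $K_*^\toprm(G;C_0(\EG)\otimes\cK)\cong K_*^\toprm(G;C_0(\EG))$ (the copy of $\cK$ carries the trivial $G$-action), exactness of the six-term sequence shows that $\partial$ is an isomorphism in both parities if and only if the middle terms $K_*^\toprm(G;\sHigComRed\EG)$ vanish. Thus the claimed isomorphism and \eqref{eq_Ktop_vanishes_sHigComRed} are equivalent, and everything reduces to the vanishing.

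For the vanishing I would bring in exactness of $G$ in its guise as property~A, i.e.\ the kernels of \cref{fact_amenable_exact_kernels}\ref{fact_exact_kernel}: a sequence $(k_n)$ in $C_c(G\times G,\Delta)$ of normalised positive type kernels, supported in finite-width neighbourhoods of $\Delta$ and converging to $1$ uniformly on finite-width sets. Through the associated Schur-type multipliers these produce a net of $G$-equivariant, completely positive, finite-propagation endomorphisms approximating the identity. The aim is to feed them into a Kasparov cycle for $\KK^G_*(C_0(\EG),\sHigComRed\EG)$ and build a contracting homotopy: the finite propagation of $k_n$ together with the vanishing-variation condition defining $\sHigComRed\EG$ should allow one to displace the index data towards infinity, where the \emph{reduced} structure (values in $\cB(H)$ with differences in $\cK$, the very feature exploited in \cref{lem_counterex}) supplies the room for an Eilenberg swindle that kills the class. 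Equivalently, and more economically, one may recognise $\partial$ as the equivariant coarse co-assembly map and invoke that it is an isomorphism for groups with property~A.

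The reduction of the first paragraph is routine; the genuine obstacle is the vanishing. The delicate point is to make the property-A multipliers act on a $\sHigComRed\EG$-coefficient Kasparov module compatibly with both the $G$-action and the vanishing-variation structure, so that the resulting homotopy runs honestly through cycles with coefficients in $\sHigComRed\EG$ rather than in some larger algebra, and uniformly in the approximation parameter. I expect the reduced condition to be indispensable here, paralleling its role in \cref{lem_counterex}: it is precisely what makes the swindle available, and its absence is what forces the analogue for the \emph{un}reduced objects to behave altogether differently. Checking that this homotopy remains inside the correct $\KK^G$-group is the step I would budget the most care for.
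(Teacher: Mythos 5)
Your first paragraph is sound and agrees with the formal skeleton of the paper's argument: with a $G$-finite $\EG$ the functor $K_*^\toprm(G;\blank)=\KK_*^G(C_0(\EG),\blank)$ turns \eqref{eq_SES_sHigComCorRed} into a six-term exact sequence, and after absorbing the $\cK$-factor the bijectivity of $\partial$ is equivalent to the vanishing \eqref{eq_Ktop_vanishes_sHigComRed}. The problem is that all of the substance lies in the second step, and there your primary route is not a proof. The proposed mechanism --- Schur multipliers built from property-A kernels acting on $\KK^G(C_0(\EG),\sHigComRed\EG)$-cycles, finite propagation ``displacing index data towards infinity'', and an Eilenberg swindle enabled by the reduced structure --- is a programme rather than an argument: you do not construct the homotopy, you do not explain why the displaced cycle is null (displacement is not by itself a swindle; one needs an actual infinite-repetition mechanism inside the coefficient algebra, compatible with the $G$-action and the vanishing-variation condition, and whether $\sHigComRed\EG$ accommodates one is essentially the content of the theorem being proved), and you yourself flag the compatibility of the multipliers with the $\KK^G$-structure as the unchecked step. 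As written, the vanishing is not established.

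What saves the proposal is the sentence you offer as an aside: recognising $\partial$ as a co-assembly map and invoking its bijectivity for exact groups. That is exactly the paper's proof. The triangle \eqref{eq_assembly_and_coassembly} together with the map of long exact sequences induced by \eqref{eq_SES_sHigComCorRed} identifies $\partial$, after composing with the Baum--Connes isomorphism for the proper algebra $C_0(\EG)$, with the Emerson--Meyer co-assembly map $\mu^*_\EM$, whose bijectivity for exact $G$ with $G$-finite $\EG$ is the cited input; the vanishing \eqref{eq_Ktop_vanishes_sHigComRed} is then read off from the exact sequence --- the same deduction as your reduction, run in the opposite direction. (One small correction: the relevant map is $\mu^*_\EM$, defined on $K_*^\toprm(G;\sHigCorRed G)$, not the equivariant coarse co-assembly map $\mu^*_G$, which is defined on $K_*(\sHigCorRed G\rtimes_\red G)$; distinguishing the two is precisely what \eqref{eq_assembly_and_coassembly} encodes.) Had you committed to this route and spelled out the identification of $\partial$ with $\mu^*_\EM$, the proof would be complete modulo the citation; the swindle route, by contrast, amounts to reproving the Emerson--Meyer theorem and is left with its hardest step open.
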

\begin{proof}
We have the commutative diagram with bijective top horizontal map 
\begin{equation}
\label{eq_assembly_and_coassembly}
\xymatrix{
K_*^\toprm(G;\sHigCorRed G) \ar[rr]^-{\mu^*_\EM,\cong} \ar[dr]_-{\mu_*^\BC} && K_G^{1-*}(\EG)\\
& K_*(\sHigCorRed G \rtimes_\red G) \ar[ur]_-{\mu^*_G} & 
}
\end{equation}
where $\mu^*_\EM$ is the co-assembly map of Emerson and Meyer, $\mu_*^\BC$ is the Baum--Connes assembly map, $\mu^*_G$ is the equivariant coarse co-assembly map, and where we have by definition $K^{1-*}_G(\EG) \coloneqq K_{*-1}(C_0(\EG) \rtimes_\red G)$ \cite[Sec.~5.1]{anastructinj}.

The short exact sequence \eqref{eq_SES_sHigComCorRed} induces the following commutative diagram whose rows are exact and the vertical maps are the respective assembly maps:
\begin{equation*}
\mathclap{
\xymatrix{
\cdots \ar[r] & K_*^\toprm(G;\sHigCorRed G) \ar[r]^-{\partial} \ar[d] \ar@{-->}[dr]^-{\mu^*_\EM,\cong} & K_{*-1}^\toprm(G;C_0(\EG)) \ar[r] \ar[d]^-{\cong} & K_{*-1}^\toprm(G;\sHigComRed \EG) \ar[r] \ar[d] & \cdots \\
\cdots \ar[r] & K_*(\sHigCorRed G \rtimes_\red G) \ar[r]^-{\partial} & K_{*-1}(C_0(\EG) \rtimes_\red G) \ar[r] & K_{*-1}(\sHigComRed \EG \rtimes_\red G) \ar[r] & \cdots
}
}
\end{equation*}
The middle vertical map is an isomorphism, since $C_0(\EG)$ is a proper $G$-$C^*$-algebra, and the diagonal dashed map is the one from Diagram~\eqref{eq_assembly_and_coassembly}. The claim follows.
\end{proof}

Strengthening the assumption on $G$ from exactness to bi-exactness, we arrive at the following:
\begin{prop}\label{prop_biexact_SESsplit}
Let $G$ be a bi-exact group and assume that it admits a $G$-finite classifying space for proper $G$-actions $\EG$. Then we have a split short exact sequence
\begin{equation}\label{eq_biexakt_split_compactifications}
0 \to K_{*+1}(\sHigComRed \EG \rtimes_\red G) \to K_*(C^*_\red(G)) \to K_*(\sHigCom(\EG) \rtimes_\red G) \to 0.
\end{equation}
Further, the Baum--Connes conjecture for trivial coefficients $\IC$ and coefficients $\sHigComRed \EG$ are equivalent to each other for $G$ and imply the isomorphism
\begin{equation}\label{eq_biexact_BC}
K_*(C^*_\red(G)) \xrightarrow{\cong} K_*(\sHigCom(\EG) \rtimes_\red G),
\end{equation}
which is induced from the inclusion of $\cK$ as the constant functions in $\sHigCom(\EG)$.
\end{prop}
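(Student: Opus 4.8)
The plan is to compare the reduced and unreduced stable Higson compactifications through the Calkin algebra. First I would record, exactly as in the proof of \cref{lem_counterex}, that $\sHigCom(\EG)$ is an ideal in $\sHigComRed \EG$ with quotient the Calkin algebra $\cQ \coloneqq \cB(H)/\cK(H)$ carrying the trivial $G$-action, and that the constant functions furnish a morphism from the extension $0 \to \cK \to \cB(H) \to \cQ \to 0$ to $0 \to \sHigCom(\EG) \to \sHigComRed \EG \to \cQ \to 0$ which is the identity on $\cQ$. Since $G$ is bi-exact, hence exact, applying $\blank \rtimes_\red G$ keeps both extensions exact. Fixing isometries $s_n \in \cB(H)$ with $\sum_n s_n s_n^\ast = 1$, the $G$-equivariant endomorphism $T \mapsto \sum_n s_n T s_n^\ast$ of $\cB(H)$ realises an Eilenberg swindle, so that $K_\ast(\cB(H) \rtimes_\red G) = 0$ (the trivial action giving $\cB(H) \rtimes_\red G = \cB(H) \otimes_{\min} C^\ast_\red(G)$). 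Consequently the connecting map of the upper extension is an isomorphism $K_\ast(\cQ \rtimes_\red G) \cong K_{\ast-1}(C^\ast_\red(G))$, and naturality of the six-term sequence along the morphism of extensions rewrites the lower six-term sequence as the long exact sequence
\[
\cdots \to K_n(\sHigCom(\EG) \rtimes_\red G) \xrightarrow{j_\ast} K_n(\sHigComRed \EG \rtimes_\red G) \to K_{n-1}(C^\ast_\red(G)) \xrightarrow{\iota_\ast} K_{n-1}(\sHigCom(\EG) \rtimes_\red G) \to \cdots
\]
in which $\iota_\ast$ is induced by the inclusion $\cK \hookrightarrow \sHigCom(\EG)$ of constant functions and $j_\ast$ by $\sHigCom(\EG) \hookrightarrow \sHigComRed \EG$.

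With this sequence in hand, the split short exact sequence of the first assertion is exactly the statement that $j_\ast = 0$, equivalently that $\iota_\ast$ is surjective, so that the long exact sequence breaks into the displayed short pieces with kernel $K_{\ast+1}(\sHigComRed \EG \rtimes_\red G)$. This vanishing is where I expect the real work to lie. The naive swindle does not settle it, since the infinite repeat $f \mapsto \sum_n s_n f s_n^\ast$ destroys compactness of differences and so does not preserve $\sHigComRed \EG$; the route I would take instead is to construct an honest section of $\iota_\ast$ out of a $G$-equivariant retraction of $\sHigCom(\EG)$ onto its constant functions $\cK$. No such retraction exists as a $\ast$-homomorphism, but bi-exactness supplies the missing input, namely amenability of the $G$-action on $hG$ from \cref{prop_equiv_amenable_stable_compactification} (indeed strong amenability of $\sHigCom(\EG)$), which furnishes approximately $G$-invariant means and hence an averaging map realising the splitting at the level of $K$-theory. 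As a structural check, the purely topological analogue collapses unconditionally: since $K_\ast^\toprm(G;\sHigComRed \EG) = 0$ by \cref{prop_LHS_BC_sHigComRed}, the topological counterpart of the sequence above already yields an isomorphism $K_\ast^\toprm(G) \cong K_\ast^\toprm(G;\sHigCom(\EG))$, and it is transporting this isomorphism across the assembly maps, using bi-exactness, that I expect to force the analytic splitting. Pinning this down is the main obstacle.

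For the second assertion I would argue as follows. Because $G$ is exact it has property A, so the Baum--Connes assembly map is split injective with arbitrary coefficients; combined with $K_\ast^\toprm(G;\sHigComRed \EG) = 0$ from \cref{prop_LHS_BC_sHigComRed}, this makes the conjecture for coefficients $\sHigComRed \EG$ equivalent to $K_\ast(\sHigComRed \EG \rtimes_\red G) = 0$. Feeding this vanishing into the long exact sequence above turns $\iota_\ast$ into the isomorphism $K_\ast(C^\ast_\red(G)) \xrightarrow{\cong} K_\ast(\sHigCom(\EG) \rtimes_\red G)$ of \eqref{eq_biexact_BC}, induced by $\cK \hookrightarrow \sHigCom(\EG)$ as claimed. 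To identify validity of the conjecture for $\sHigComRed \EG$ with that for the trivial coefficients $\IC$, I would apply the five lemma to the assembly ladder attached to \eqref{eq_SES_sHigComCorRed}: the ideal $C_0(\EG) \otimes \cK$ has proper $G$-algebra coefficients, for which Baum--Connes always holds, so the conjecture for $\sHigComRed \EG$ is equivalent to the conjecture for the reduced corona $\sHigCorRed G$. Finally the commutative triangle \eqref{eq_assembly_and_coassembly}, whose top map $\mu^\ast_\EM$ is an isomorphism, makes the conjecture for $\sHigCorRed G$ (that is, $\mu_\ast^\BC$ an isomorphism) equivalent to the equivariant coarse co-assembly map $\mu^\ast_G$ being an isomorphism; bi-exactness, via amenability at infinity, is precisely what renders this coarse co-assembly statement equivalent in turn to Baum--Connes for $\IC$, closing the chain of equivalences.
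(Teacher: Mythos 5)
Your setup is the same as the paper's: the extension $0 \to \sHigCom(\EG) \to \sHigComRed \EG \to \cQ \to 0$, its comparison with $0 \to \cK \to \cB(H) \to \cQ \to 0$, the swindle giving $K_*(\cB(H)\rtimes_\red G)=0$ and hence $K_{*+1}(\cQ\rtimes_\red G)\cong K_*(C^*_\red(G))$, and the observation that everything reduces to showing $j_*\colon K_*(\sHigCom(\EG)\rtimes_\red G)\to K_*(\sHigComRed\EG\rtimes_\red G)$ vanishes and that the resulting short exact sequences split. But at exactly that point there is a genuine gap, which you yourself flag (``pinning this down is the main obstacle''). The mechanism you propose --- an averaging map built from approximately invariant means giving a $G$-equivariant retraction of $\sHigCom(\EG)$ onto the constant copy of $\cK$ ``at the level of $K$-theory'' --- does not work: such an average is at best a completely positive map, not a $*$-homomorphism, and induces nothing on $K$-theory. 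The correct use of bi-exactness is different: by \cref{prop_equiv_amenable_stable_compactification}, $\sHigCom(\EG)$ is an amenable $(h\EG\rtimes G)$-\textCstar-algebra, and therefore the Baum--Connes assembly map \emph{with coefficients $\sHigCom(\EG)$ is an isomorphism} by \cite[Cor.~0.4]{BC_for_amenable}. Feeding this into the naturality square
\begin{equation*}
\xymatrix{
K_*^\toprm(G;\sHigCom(\EG)) \ar[r] \ar[d]^-{\cong} & K_*^\toprm(G;\sHigComRed\EG)=0 \ar[d] \\
K_*(\sHigCom(\EG)\rtimes_\red G) \ar[r]^-{j_*} & K_*(\sHigComRed\EG\rtimes_\red G)
}
\end{equation*}
(the vanishing on the top right being \cref{prop_LHS_BC_sHigComRed}) immediately gives $j_*=0$, and the composite of the inverse of the left vertical isomorphism with the boundary isomorphism $K_{*+1}^\toprm(G;\cQ)\xrightarrow{\cong}K_*^\toprm(G;\sHigCom(\EG))$ and the assembly map for $\cQ$ provides the section of $\partial$, i.e.\ the splitting. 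So the missing idea is precisely the Higson--Tu theorem that assembly is an isomorphism for amenable coefficient algebras; you had the amenability in hand but deployed it through a nonexistent retraction instead.

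For the second assertion your route also drifts from anything you can justify. The reduction of BC for $\sHigComRed\EG$ to the vanishing $K_*(\sHigComRed\EG\rtimes_\red G)=0$ via split injectivity (property A) plus \cref{prop_LHS_BC_sHigComRed} is fine, and so is deducing \eqref{eq_biexact_BC} from that vanishing. But the closing step --- passing to the corona via the five lemma, then to the coarse co-assembly map via \eqref{eq_assembly_and_coassembly}, and finally asserting that bi-exactness makes ``coarse co-assembly an isomorphism'' equivalent to BC for $\IC$ --- is unsupported: no such equivalence is established in the paper or follows from what you have written. The paper instead reads the equivalence of BC for $\IC$ and for $\sHigComRed\EG$ directly off the same diagram: BC for $\IC$ is equivalent to BC for $\cQ$ (via the swindle and the extension $0\to\cK\to\cB\to\cQ\to0$), and in the six-term ladder the assembly maps for $\sHigCom(\EG)$ are isomorphisms while the top middle term vanishes, so the assembly map for $\cQ$ is an isomorphism if and only if the one for $\sHigComRed\EG$ is. You should replace your final chain of equivalences with this diagram chase.
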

\begin{proof}
We have a short exact sequence of $G$-$C^*$-algebras
\begin{equation}\label{eq_SES_Compactification_Q}
0 \to \sHigCom(\EG) \to \sHigComRed \EG \to \cQ \to 0,
\end{equation}
where $\cQ$ is the Calkin algebra of a separable, $\infty$-dimensional Hilbert space, and $\cQ$ is equipped with the trivial $G$-action. We consider the resulting commutative diagram with exact rows and where the vertical maps are the respective assembly maps:
\begin{equation*}
\mathclap{
\xymatrix{
\cdots \ar[r]^-{\partial} & K_*^\toprm(G;\sHigCom(\EG)) \ar[r] \ar[d] & K_{*}^\toprm(G;\sHigComRed \EG) \ar[r] \ar[d] & K_{*}^\toprm(G;\cQ) \ar[r]^-{\partial} \ar[d] & \cdots \\
\cdots \ar[r]^-{\partial} & K_*(\sHigCom(\EG) \rtimes_\red G) \ar[r] & K_{*}(\sHigComRed \EG \rtimes_\red G) \ar[r] & K_{*}(\cQ \rtimes_\red G) \ar[r]^-{\partial} & \cdots
}
}
\end{equation*}
By \cref{prop_LHS_BC_sHigComRed} we have $K_{*}^\toprm(G;\sHigComRed \EG) \cong 0$ and therefore the boundary maps in the top row are isomorphisms.
Since we assume that $G$ is bi-exact, $\sHigCom(\EG)$ is an amenable $(h\EG \rtimes G)$-$C^*$-algebra and we conclude that the assembly map for it is an isomorphism \cite[Cor.~0.4]{BC_for_amenable}.
We therefore arrive at the diagram
\begin{equation}\label{eq_diagram_biexact_BC}
\xymatrix{
\cdots \ar[r]^-{\partial,\cong} & K_*^\toprm(G;\sHigCom(\EG)) \ar[r] \ar[d]^\cong & 0 \ar[r] \ar[d] & K_{*}^\toprm(G;\cQ) \ar[r]^-{\partial,\cong} \ar[d] & \cdots \\
\cdots \ar[r]^-{\partial} & K_*(\sHigCom(\EG) \rtimes_\red G) \ar[r] & K_{*}(\sHigComRed \EG \rtimes_\red G) \ar[r] & K_{*}(\cQ \rtimes_\red G) \ar[r]^-{\partial} & \cdots
}
\end{equation}
showing that $K_*(\sHigCom(\EG) \rtimes_\red G) \to K_{*}(\sHigComRed \EG \rtimes_\red G)$ is the zero map. The isomorphisms in this diagram provide the split for the resulting short exact sequence
\[
0 \to K_{*+1}(\sHigComRed \EG \rtimes_\red G) \to K_{*+1}(\cQ \rtimes_\red G) \xrightarrow{\partial} K_*(\sHigCom(\EG) \rtimes_\red G) \to 0\,.
\]
The proof of \eqref{eq_biexakt_split_compactifications} is finished with the isomorphism $K_{*+1}(\cQ \rtimes_\red G) \cong K_*(C^*_\red(G))$ which is the boundary map in the long exact sequence induced from the short exact sequence of $G$-$C^*$-algebras $0 \to \cK \to \cB \to \cQ \to 0$ with the trivial $G$-action. To see that \eqref{eq_biexact_BC} is induced from the inclusion of $\cK$ as the constant functions in $\sHigCom(\EG)$ we refer to the proof of Proposition 5.5 in \cite{EmeMey}.

The final statement of the proposition about the Baum--Connes conjecture follows from Diagram \eqref{eq_diagram_biexact_BC}.
\end{proof}

We can now prove an equivariant version of \cite[Prop.~4.5]{WilletHomological}. To state it we need the canonical inclusion $i\colon C(\partial G) \otimes \cK \to \sHigCor G$ from \cite[Prop.~3.6]{EmeMey} for boundaries at infinity $\partial G$ of suitable compactifications of $G$. In the following such a boundary will arise as the boundary of a compactification $\bar P$ of a suitable model $P$ for $\EG$.

\begin{prop}\label{prop_iso_willett}
Let $G$ be a countable, discrete group.

We assume that $G$ admits a $G$-finite model $P$ for its classifying space for proper $G$-actions $\EG$ such that $P$ admits a metrizable compactification $\bar P$ satisfying:
\begin{enumerate}
\item $\bar P$ is Higson-dominated,
\item $\bar P$ is $H$-equivariantly contractible for every finite subgroup $H < G$,
\item the $G$-action on $P$ extends to an amenable action on $\bar P$.
\end{enumerate}

Then the inclusion $i\colon C(\partial G) \otimes \cK \to \sHigCor G$, where $\partial G$ is the boundary of $P$ inside $\bar P$, induces an isomorphism
\begin{equation}\label{eq_iso_boundary_Higson_corona}
K_*(C(\partial G) \rtimes_\red G) \cong K_*(\sHigCor G \rtimes_\red G).
\end{equation}
\end{prop}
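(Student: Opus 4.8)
The plan is to exhibit the inclusion $i$ as the map induced on quotients by a morphism between two extensions that share the same ideal, and then to reduce the whole statement to an isomorphism on the two ``total spaces'', which can be checked on topological $K$-theory. Concretely, restriction of functions along $P \subset \bar{P}$ sends a continuous function on the Higson-dominated compactification $\bar{P}$ to a bounded function of vanishing variation on $P$, and therefore defines a $G$-equivariant $\ast$-homomorphism $j\colon C(\bar{P})\otimes\cK \to \sHigCom P$. Together with the identity on the common ideal $C_0(P)\otimes\cK$ it yields a morphism of extensions
\begin{equation*}
\xymatrix{
0 \ar[r] & C_0(P)\otimes\cK \ar[r] \ar@{=}[d] & C(\bar{P})\otimes\cK \ar[r] \ar[d]^-{j} & C(\partial G)\otimes\cK \ar[r] \ar[d]^-{i} & 0\\
0 \ar[r] & C_0(P)\otimes\cK \ar[r] & \sHigCom P \ar[r] & \sHigCor P \ar[r] & 0,
}
\end{equation*}
whose map on quotients is, after the coarse identification $\sHigCor P \cong \sHigCor G$, exactly the inclusion $i$ of \cite[Prop.~3.6]{EmeMey}. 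Since the amenable action on $\bar{P}$ forces $G$ to be exact, applying $\blank\rtimes_\red G$ leaves both rows exact, so the five lemma (two of three vertical maps being isomorphisms forces the third) reduces the claim, via stability of $K$-theory on the left, to showing that $j$ induces an isomorphism $K_*(C(\bar{P})\otimes\cK\rtimes_\red G) \cong K_*(\sHigCom P \rtimes_\red G)$.

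Next I would pass to topological $K$-theory via assembly. Pulling back the amenable action on $\bar{P}$ along the canonical $G$-map $hP \to \bar{P}$ (as in \cite[Exer.~4.4.3]{BrownOzawaCstarFiniteDim}) shows that $G$ acts amenably on $hP = hG$, so by \cref{prop_equiv_amenable_stable_compactification} the algebra $\sHigCom P$ is an amenable $G$-\textCstar-algebra; the commutative algebra $C(\bar{P})$ is amenable by hypothesis. Hence by \cite[Cor.~0.4]{BC_for_amenable} the Baum--Connes assembly maps for both algebras are isomorphisms, and by naturality of assembly it suffices to prove that $j$ induces an isomorphism on $K_*^\toprm(G;\blank)$. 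As $P$ is a $G$-finite model for $\EG$, the colimit defining topological $K$-theory collapses to $K_*^\toprm(G;A) \cong \KK^G_*(C_0(P),A)$, so the task becomes to show that $j_*\colon \KK^G_*(C_0(P),C(\bar{P})\otimes\cK) \to \KK^G_*(C_0(P),\sHigCom P)$ is an isomorphism.

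For this last step I would play off the inclusions of the constant functions $\cK$ into each side. The constant embedding $\cK \to C(\bar{P})\otimes\cK$ induces an isomorphism on $\KK^G_*(C_0(P),\blank)$: because $C_0(P)$ is a proper, cocompact $G$-\textCstar-algebra this group is detected on the finite subgroups of $G$, and there the projection $\bar{P}\to\pt$ is an equivalence by the assumed $H$-equivariant contractibility of $\bar{P}$. The constant embedding $\cK \to \sHigCom P$ likewise induces an isomorphism on $\KK^G_*(C_0(P),\blank)$ by the computation underlying \cite[Prop.~5.5]{EmeMey}, which is the very same constant inclusion that appears in \cref{prop_biexact_SESsplit}. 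Since $j$ sends constants to constants, these two constant inclusions together with $j_*$ form a commuting triangle in which the two outer maps are isomorphisms, and hence $j_*$ is an isomorphism; feeding this back through the assembly isomorphisms and the five lemma gives that $i$ induces the isomorphism \eqref{eq_iso_boundary_Higson_corona}.

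The routine ingredients here are the five lemma, the coarse identification $\sHigCor P\cong\sHigCor G$, the stability of $K$-theory, and the naturality of the assembly map. I expect the main obstacle to be the pair of constant-inclusion isomorphisms on $\KK^G_*(C_0(P),\blank)$: the first requires converting the $H$-equivariant contractibility of $\bar{P}$ into a $\KK^G$-equivalence via the reduction of topological $K$-theory to finite subgroups, while the second is the genuinely coarse-geometric input of Emerson--Meyer, which must be verified to apply in the present equivariant, $G$-finite setting.
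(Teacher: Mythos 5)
Your proposal follows essentially the same route as the paper: the same morphism of extensions over the common ideal $C_0(P)\otimes\cK$, the same reduction via exactness of $G$ and the five lemma to the middle vertical map, the same passage through the assembly isomorphisms for $\sHigCom P$ and $C(\bar P)\otimes\cK$, and the same use of $H$-equivariant contractibility for the constant inclusion into $C(\bar P)\otimes\cK$. The one step you defer to Emerson--Meyer --- that the constant inclusion $\cK \to \sHigCom P$ is a $K^\toprm_*(G;\blank)$-equivalence --- is exactly what the paper supplies by combining \cref{prop_LHS_BC_sHigComRed} (the vanishing $K^\toprm_*(G;\sHigComRed P)\cong 0$, which rests on the Emerson--Meyer co-assembly isomorphism and exactness of $G$) with the boundary map of $0\to\sHigCom P\to\sHigComRed P\to\cQ\to 0$ and its compatibility with $0\to\cK\to\cB\to\cQ\to 0$, so you have correctly isolated, though not carried out, the genuinely coarse-geometric input.
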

\begin{proof}
There is a $G$-equivariant quasi-isometry $G \to P$ (canonical up to closeness) inducing $G$-equivariant $C^*$-isomorphisms $\sHigCor G \to \sHigCor P$ and $\sHigCorRed G \to \sHigCorRed P$. We conclude
\[K_*(\sHigCor G \rtimes_\red G) \cong K_*(\sHigCor P \rtimes_\red G) \quad \text{and} \quad K_*(\sHigCorRed G \rtimes_\red G) \cong K_*(\sHigCorRed P \rtimes_\red G).
\]

In the following we will use that since $G$ acts amenably on a Higson-dominated compactification, it also acts amenably on its Higson compactification and hence $G$ is bi-exact. This implies further that $G$ is exact.

We consider the commutative diagram
\begin{equation*}
\mathclap{
\xymatrix{
0 \ar[r] & (C_0(P) \otimes \cK) \rtimes_{\red} G \ar[r] \ar@{=}[d] & \sHigCom P \rtimes_\red G \ar[r] & \sHigCor P \rtimes_\red G \ar[r] & 0\\
0 \ar[r] & (C_0(P) \otimes \cK) \rtimes_{\red} G \ar[r] & (C(\bar P)\otimes \cK) \rtimes_{\red} G \ar[r] \ar[u] & (C(\partial G)\otimes \cK) \rtimes_{\red} G \ar[r] \ar[u]_-{i^\prime \rtimes_{\red} G} & 0
}
}
\end{equation*}
whose rows are exact since $G$ is an exact group. The map $i^\prime$ is the map $i$ composed with the $G$-equivariant $C^*$-isomorphism $\sHigCor G \to \sHigCor P$. From this diagram and the induced transformation of the corresponding long exact sequences in $K$-theory, together with the previously noted isomorphism $K_*(\sHigCor G \rtimes_\red G) \cong K_*(\sHigCor P \rtimes_\red G)$, we see that to prove \eqref{eq_iso_boundary_Higson_corona} it suffices to prove that the middle vertical map in the above diagram induces isomorphisms in $K$-theory.

Consider the commutative diagram whose horizontal maps are the respective assembly maps and the vertical maps are induced by $\bar{i}^\prime\colon C(\bar P) \otimes \cK \to \sHigCom P$:
\[
\xymatrix{
K_*^\toprm(G;\sHigCom P) \ar[r]^-{\cong} & K_*(\sHigCom P \rtimes_\red G)\\
K_*^\toprm(G;C(\bar P)\otimes \cK) \ar[r]^-{\cong} \ar[u] & K_*((C(\bar P)\otimes \cK) \rtimes_\red G) \ar[u]
}
\]
As already exploited in the proof of \cref{prop_biexact_SESsplit}, since $G$ is bi-exact we know that $\sHigCom P$ is an amenable $(hP \rtimes G)$-$C^*$-algebra and we can conclude that the assembly map for it, which is the top horizontal map in the above diagram, is an isomorphism \cite[Cor.~0.4]{BC_for_amenable}. By the same argument, because we assume that $G$ acts amenably on $\bar P$, we conclude that the lower horizontal map is an isomorphism \cite{tu_UCT}.

We have a short exact sequence of $G$-$C^*$-algebras
\begin{equation}\label{eq_SES_Compactification_Q_hyperbolic}
0 \to \sHigCom P \to \sHigComRed P \to \cQ \to 0\,,
\end{equation}
where $\cQ$ is the Calkin algebra of a separable, $\infty$-dimensional Hilbert space and $\cQ$ is equipped with the trivial $G$-action. We consider the resulting long exact sequence:
\begin{equation*}
\cdots \xrightarrow{\partial} K_*^\toprm(G;\sHigCom P) \to K_{*}^\toprm(G;\sHigComRed P) \to K_{*}^\toprm(G;\cQ) \xrightarrow{\partial} \cdots
\end{equation*}
\cref{prop_LHS_BC_sHigComRed} implies $K_*^\toprm(G;\sHigComRed P) \cong 0$ and we conclude that the boundary map is an isomorphism
\begin{equation}\label{eq_boundary_map_iso}
K_{*+1}^\toprm(G;\cQ) \xrightarrow{\partial,\cong} K_*^\toprm(G;\sHigCom P).
\end{equation}
On the other hand, since $\bar P$ is $H$-equivariantly contractible for every finite subgroup $H$ of $G$ we conclude by \cite[Prop.~3.7]{higson_bivariant_k_theory} that we have the isomorphism
\begin{equation}\label{eq_iso_gromov_compactification}
K_*^\toprm(G;\cK) \xrightarrow{\cong} K_*^\toprm(G;C(\bar P)\otimes \cK)
\end{equation}
induced from the inclusion of $\IC$ into $C(\bar P)$ as constant functions. We check (same as \cite[Proof of Prop.~5.5]{EmeMey}) that the diagram with horizontal maps \eqref{eq_boundary_map_iso} and \eqref{eq_iso_gromov_compactification}
\[
\xymatrix{
K_{*+1}^\toprm(G;\cQ) \ar[r]^-{\partial,\cong} \ar[d]_-{\partial,\cong} & K_*^\toprm(G;\sHigCom P)\\
K_*^\toprm(G;\cK) \ar[r]^-{\cong} & K_*^\toprm(G;C(\bar P)\otimes \cK) \ar[u]
}
\]
commutes, where the left vertical map is the boundary map induced from the short exact sequence $0 \to \cK \to \cB \to \cQ \to 0$ and the right vertical map is the one where we have to show that it is an isomorphism. But this follows from the diagram.
\end{proof}

\begin{ex}\label{ex_gromov_iso}
Let $G$ be a Gromov hyperbolic group. We collect in the following the references which varify the assumptions of \cref{prop_iso_willett} for these groups.

Let $P_d(G)$ be the Rips complex at scale $d \ge 1$ of the group $G$ (equipped with any word metric) and $P$ be its second barycentric subdivision. It is known that for large $d$ this is a model for the classifying space $\EG$ for proper actions of $G$ \cite{meintrup_schick}. There is a $G$-equivariant quasi-isometry $G \to P$ (canonical up to closeness) and hence $P$ is also hyperbolic and its Gromov boundary is canonically equivariantly homeomorphic to the boundary $\partial G$ of $G$.

It is known that the Gromov compactification $\bar P$ is $H$-equivariantly contractible for every finite subgroup $H < G$,\footnote{This is a general result and follows from the Whitehead Theorem for Families \cite[Thm.~1.6]{lueck_survey}: Apply it to $Y = \bar P$, $Z = \mathrm{pt}$, $H$ as the group, and all its subgroups as the family $\cF$. In Point (i) of its statement we choose $X = \bar P$ and get that the set $[X,Y]^H$ of $H$-homotopy classes of $H$-maps consists of a single element. But this set contains both the identity and the map to a single $H$-fixed point.} and it is furthermore known that $G$ acts amenably on $\partial G$ \cite[Ex.~2.7.4]{anantharaman} and therefore also on $\bar P$. By \cite[Cor.~2.2]{roe_hyperbolic} we know that the Gromov compactification is Higson dominated.

Finally, let us mention that hyperbolic groups satisfy the Baum--Connes conjecture for all coefficients \cite{lafforgue_BC} and hence the isomorphism \eqref{eq_biexact_BC} holds true.
\end{ex}

\printbibliography[heading=bibintoc]

\end{document}